\newcommand\bigforall{\mbox{\LARGE $\mathsurround0pt\forall$}}
\DeclareFontFamily{U}{mathx}{\hyphenchar\font45}
\DeclareFontShape{U}{mathx}{m}{n}{
      <5> <6> <7> <8> <9> <10>
      <10.95> <12> <14.4> <17.28> <20.74> <24.88>
      mathx10
      }{}
\DeclareSymbolFont{mathx}{U}{mathx}{m}{n}
\DeclareMathAccent{\widecheck}{0}{mathx}{"71}
\DeclareMathAccent{\wideparen}{0}{mathx}{"75}
\newcommand{\PPL}                		{{\mathsf{PPL}}}
\newcommand{\CPL}                		{{\mathsf{CPL}}}
\newcommand{\svPL}                	 {{\mathsf{svPL}}}
\newcommand{\RCOF}              	 {{\mathsf{RCOF}}}
\newcommand{\inte}[1]               	  {{\mathop{\textstyle\int\!{#1}}}}
\newcommand{\pwp}[1]                 	{\mathop{\wp^+{#1}}}
\newcommand{\fwp}[1]                 	{\mathop{\wp_\mathsf{fin}{#1}}}
\newcommand{\pfwp}[1]                 	{\mathop{\wp^+_\mathsf{fin}{#1}}}
\newcommand{\satp}[1]                 	{\mathbin{\sat_{#1}}}
\newcommand{\entpq}[2]                 	{\mathbin{\ent_{#1}^{#2}}}
\newcommand{\hentpq}[2]                 {\mathbin{\ddot{\ent}_{#1}^{#2}}}
\newcommand{\mycirc}               	{\mathbin{@}}
\newcommand{\dummy}                {{\text{ }}}
\newcommand{\ds}                   {\displaystyle}
\newcommand{\qurtains}             {\hfill{QED}}
\newtheorem{definition}{\vspace{1mm}Definition}[section]
\newtheorem{pth}[definition]{\vspace{1mm}Theorem}
\newtheorem{prop}[definition]{\vspace{1mm}Proposition}
\newenvironment{proof}{\begin{trivlist}\item{\bf Proof:}}{\qurtains\end{trivlist}}
\newcommand{\SVP}[1]					 {{\widecheck{#1}}}
\newcommand{\PSV}[1]					 {{\widehat{#1}}}
\newcommand{\Prob}					{\textsf{Prob}}
\newcommand{\lrule}[2] 				{\frac{\ \ #1\ \ }{\ \ #2\ \ }}
\newcommand{\pnats}                 {{\mathbb{N}^+}}
\newcommand{\nats}                  {{\mathbb N}}
\newcommand{\reals}                 {{\mathbb R}}
\newcommand{\termvalue}[1]          {{\lbrack\!\lbrack #1 \rbrack\!\rbrack}}
\newcommand{\tv}[1]                 {{\termvalue{#1}}}
\newcommand{\inv}[1]                {{{#1}^{-1}}}
\newcommand{\from}                  {{{\leftarrow}}}
\newcommand{\lverum}                {{\mathsf{t\!t}}}
\newcommand{\lfalsum}               {{\mathsf{f\!f}}}
\newcommand{\lneg}                  {\mathop{\neg}}
\newcommand{\limp}                  {\mathbin{\supset}}
\newcommand{\leqv}                  {\mathbin{\equiv}}
\newcommand{\lconj}                 {\mathbin{\wedge}}
\newcommand{\ldisj}                 {\mathbin{\vee}}
\newcommand {\num}[1]              {{#1}}
\newcommand{\satc}                    {\sat_{\mathsf{c}}}
\newcommand{\entc}                    {\ent_{\mathsf{c}}}
\newcommand{\sat}                    {\Vdash}
\newcommand{\ent}                    {\vDash}
\newcommand{\der}                    {\vdash}
\newcommand{\satfo}                  {\mathbin{\sat_{\mathsf{fo}}}}
\newcommand {\HYP}		{\text{HYP}}
\newcommand {\TAUT}                 {\text{TT}}
\newcommand {\MP}                   {\text{MP}}
\newcommand {\cA}                   {\mathcal{A}}
\newcommand {\RR}                   {\text{RR}}
\newcommand {\DNF}                   {\text{DNF}}
\newcommand{\mea}           {\mu}
\newcommand{\fdd}           {\eta}
\newcommand{\cF}           {\mathcal{F}}
\begin{document}

\title{On probability and logic}

\author{A.~Sernadas\ \ J.~Rasga\ \ C.~Sernadas\\[1mm]
{\scriptsize Departamento de Matemática, Instituto Superior Técnico}\\[-1mm] 
{\scriptsize Centro de Matemática, Aplicações Fundamentais e Investigação Operacional}\\ [-1mm]
{\scriptsize Universidade de Lisboa, Portugal}\\
{\tiny \{acs,jfr,css\}@math.tecnico.ulisboa.pt}}

\date{December 5, 2015}
 
\maketitle

\begin{abstract} 
Within classical propositional logic, assigning probabilities to formulas is shown to be equivalent to assigning probabilities to valuations.
A novel notion of probabilistic entailment enjoying desirable properties of logical consequence is proposed and shown to collapse into the classical entailment when the language is left unchanged. 
Motivated by this result, a decidable conservative enrichment of propositional logic is proposed by giving the appropriate semantics to a new language construct that allows the constraining of the probability of a formula. 
A sound and weakly complete axiomatization is provided using the decidability of the theory of real closed ordered fields.
\\[2mm]
{\bf Keywords}: probabilistic propositional logic, stochastic valuation, probabilistic entailment, decidability.\\[2mm]
{\bf AMS MSC2010}: 03B48, 03B60.
\end{abstract}


\section{Introduction}\label{sec:introduction}

Starting as far back as~\cite{ram:26} and~\cite{car:50},
adding probability features to logic has been a recurrent research topic. 

The introduction of probabilities in formal logic is quite challenging since there is the need to accommodate the continuous nature of probabilities within the discrete setting of symbolic reasoning. 
It is also interesting from the practical point of view since probabilistic reasoning is very relevant in many fields such as philosophy, economics, computer science and artificial intelligence. 

Several ways to combine probabilities and  logic have been considered. 
One can assign probabilities either to formulas or to models. 
One can either keep the original language unchanged by introducing probabilities only at the meta-level or change the language in order to internalize probabilistic assertions. 

For seminal examples of assigning probabilities to formulas while leaving the formal language unchanged see
\cite{nil:86,hai:96,adam:98,hai:11}.
Under this approach, a probabilistic entailment is defined by relating the probabilities of the hypotheses and the probability of the conclusion. 

The approach of assigning probabilities to models was first explored in~\cite{bur:69} and subsequently revisited by several authors, all of them choosing to change the original language in order to be able to express probabilistic assertions. Two techniques were considered when endowing models with probabilities. 

The ``endogenous'' technique adopted by  most authors consists of enriching each model of the original logic with a probability measure on some components. For instance, in modal-like logic this approach was followed for Kripke structures by assigning probabilities to worlds ~\cite{bur:69,fag:hal:94} or to the pairs in the accessibility relation~\cite{hoe:92,ben:09}. This technique has been quite pervasive in probabilistic versions of logics for reasoning about computer programs involving random operations, for example in~\cite{har:84,har:02,rchadha:lcf:pmat:acs:06}. It was also used in~\cite{kei:85,aba:94,ras:00,lcf:jfr:acs:css:06,kei:09,jfr:wl:css:13} for probabilizing predicate logic by assigning probabilities to the individuals in the domain.

The ``exogenous'' technique  consists of assigning a probability to each model of the original logic or to a class of models of the original logic~\cite{nil:86,bac:90,hal:90,aba:94,hei:01,hal:05,pmat:acs:css:05a,roc:08,spe:13}. A similar technique was used in~\cite{pmat:acs:05} for assigning amplitudes to models in order to set-up a logic for reasoning about quantum systems.

The existence of so diverse proposals of incorporating probability into formal logic raises the problem of expressivity namely, for instance, if nesting probability operators will be more expressive. The negative answer is given in~\cite{fin:14}.

In this paper, within the setting of propositional logic, 
we propose in Section~\ref{sec:svs} the novel notion of stochastic valuation and its main properties. 
The section ends with the proof of the equivalence of assigning probabilities to formulas and assigning probabilities to valuations.
Afterwards, we address two problems using the latter approach via stochastic valuations. 

First, leaving the language unchanged, in Section~\ref{sec:pent} we start by criticizing the definition of probabilistic entailment introduced in~\cite{hai:96,hai:11} and then propose a novel notion enjoying the usual properties of a logical consequence. The section ends with the proof of the collapse of probabilistic entailment into classical entailment.
 
Second, since nothing is gained by probabilizing formulas or models while keeping the language unchanged, 
we propose in Section~\ref{sec:PPL} a small enrichment ($\PPL$) of propositional logic by providing the appropriate stochastic-valuation semantics to a new language constructor that allows (without nesting) the constraining of the probability of a formula. 
At the end of Section~\ref{sec:PPL}, capitalizing on the decidability of the theory of real closed ordered fields, we present an axiomatization of $\PPL$.
This axiomatization is shown to be sound and weakly complete in Section~\ref{sec:soundcomp}.
Moreover, we prove in Section~\ref{sec:cons} that $\PPL$ is a decidable conservative extension of classical propositional logic. 

The paper ends with an assessment of what was achieved and a brief discussion of possible future work in Section~\ref{sec:concs}.

\section{Stochastic valuations}\label{sec:svs}

Towards endowing propositional logic with a probabilistic semantics, we introduce here the notion of stochastic valuation and show that it induces a probability assignment to formulas that fulfils the principles postulated 
in~\cite{adam:98}. We also show that each probability assignment to formulas fulfilling those principles induces a unique stochastic valuation that recovers the original assignment. These results allow us to conclude that the choice of assigning probabilities to valuations or to formulas is immaterial. In the subsequent sections of this paper we stick to the approach of assigning probabilities to valuations using stochastic valuations.

Throughout the paper, $L$ is the propositional language generated by the set $B=\{B_j: j \in \nats\}$ of propositional symbols  using the connectives $\lneg$ and $\limp$. 
The other connectives, as well as $\lverum$ (verum) and $\lfalsum$ (falsum), are introduced as abbreviations as usual. Recall that a (classical) valuation is a map $v: B \to \{0,1\}$. Given $A \in \pfwp B$, we say that an $A$-valuation is a map $u: A \to \{0,1\}$.\footnote{Given a set $S$, we denote 
the collection of its subsets by $\wp S$,
the collection of its non-empty subsets by $\pwp S$,
the collection of its finite subsets by $\fwp S$
and
the collection of its non-empty finite subsets by $\pfwp S$.}
We use $v \satc \alpha$ for stating that valuation $v$ satisfies formula $\alpha$ and $\Delta \entc \alpha$ for stating that
$\Delta$ entails $\alpha$, that is $v \satc \alpha$ whenever  $v \satc \delta$ for every $\delta \in \Delta$.

When probabilizing valuations one might be tempted to look at probabilistic valuations as random variables taking values on the set of all classical valuations. However, it turns out that it is much better to look at them as stochastic processes as follows.

By a {\it stochastic valuation} $V$ we mean a collection $\{V_{B_j}: B_j \in B\}$ of discrete  
random variables defined on a probability space $(\Omega, \cF,\mea)$ and
taking values in $\{0,1\}$.

In other words, such a $V$ is a stochastic process indexed by $B$ where
$\Omega$ is the set of outcomes,
$\cF \subseteq \wp\Omega$ is the $\sigma$-field of events,
$\mea:\cF \to [0,1]$ is the probability measure
and each $V_{B_j}:\Omega\to\{0,1\}$ is a measurable map (that is, such that $\inv{(V_{B_j})}(S) \in \cF$ for every $S\subseteq\{0,1\}$). For further details on the notion of stochastic process consult, for instance,~\cite{bil:12}.

For the purposes of this paper it is convenient to identify each valuation $v$ 
with the subset $\{B_j:v(B_j)=1\}$ of $B$. Accordingly, restriction is achieved by intersection:
given a subset $A$ of $B$,
$v|_A = v \cap A$.

Moreover, it becomes handy to assume that 
each random variable $V_{B_j}$ takes values in $\{\emptyset,\{B_j\}\}$ 
with
$\emptyset$ standing for $0$
and
$\{B_j\}$ for $1$.

Then, given a non-empty finite subset $A=\{B_{j_1},\dots,B_{j_n}\}$ of $B$ and $U \subseteq A$,
we write
$$\Prob(V_A =U)$$
for the (joint) probability (given by $V$)
$$\mea\left(\bigcap_{k=1}^n\inv{(V_{B_{j_k}})}(U \cap \{B_{j_k}\})\right)$$
of each $B_{j_k} \in U$ being true and each $B_{j_k} \in A \setminus U$ being false.

In particular,
$$\Prob(V_{B_j} = \{B_j\}) = \mea\left(\inv{(V_{B_j})}(\{B_j\})\right)$$
is the probability (given by $V$) of $B_j$ being true
while
$$\Prob(V_{B_j} = \emptyset) = \mea\left(\inv{(V_{B_j})}(\emptyset)\right)$$
is the probability (given by $V$) of $B_j$ being false. 

Each stochastic valuation $V$ induces the family
$$\{U \mapsto \Prob(V_A =U): \wp A \to [0,1]\}_{A\in\pfwp B}$$
of finite-dimensional (joint probability) distributions that we may call {\it finite-dimensional probabilistic valuations}.

This family is consistent in the sense that the following {\it marginal condition} holds:
$$\Prob(V_{A'}=U') =\sum_{\text{\shortstack[c]
						{$U \subseteq A$\\
						$U {\cap} A'{=}U'$
					  }}}  						\Prob(V_A=U)
	\;\; \forall A {\in} \pfwp B\; \forall A' {\in} \pwp A\; \forall U' {\in} \wp A'.$$

Conversely, given a consistent system of finite-dimensional distributions (in our case, finite-dimensional probabilistic valuations), 
the Kolmogorov existence theorem (see Section 36 of~\cite{bil:12}) guarantees the existence of a 
unique stochastic process (in our case, a unique stochastic valuation) 
that induces those finite-dimensional distributions. This theorem will be frequently used in the paper.
Its availability well justifies our claim that it is much better to probabilize valuations using stochastic processes.

Given $\alpha \in L$, let
$B_\alpha$ be the set of propositional symbols occurring in $\alpha$ and 
$\termvalue{\alpha}$ be the set $\{v \cap B_\alpha: v \satc \alpha\}$ of the restrictions to $B_\alpha$ of the valuations that satisfy $\alpha$. 

With these notions and notation at hand we are ready to compute the probability that a stochastic valuation assigns to a formula.

Given $\alpha \in L$ and a stochastic valuation $V$, 
the {\it probability of} $\alpha$ {\it under} $V$, is computed as follows:
$$\Prob_{V}(\alpha)=\sum_{U \in \termvalue{\alpha}} \Prob(V_{B_\alpha}=U).$$ 
That is, the probability of $\alpha$ under $V$ is the sum of the probabilities of the restrictions to $B_\alpha$ of the classical valuations that satisfy $\alpha$. These probabilities are provided by the finite-dimensional probabilistic valuation 
$$U \mapsto \Prob(V_{B_\alpha} =U): \wp B_\alpha \to [0,1]$$
induced by $V$ on $B_\alpha$.


The probabilities that a stochastic valuation assigns to formulas fulfil the principles postulated by Adams 
(in~\cite{adam:98}) as we now proceed to show.
Recall that according to Adams, a {\it probability assignment} is a map $P$ on $L$ satisfying the following principles:
\begin{itemize}
\item [P1] $0 \leq P(\alpha) \leq 1$;
\item [P2] If $\entc \alpha$ then $P(\alpha)=1$;
\item [P3] If $\alpha \entc \beta$ then $P(\alpha) \leq P(\beta)$;
\item [P4] If $\entc \lneg (\beta \lconj \alpha)$ then $P(\beta \ldisj \alpha)= P(\beta) + P(\alpha)$.
\end{itemize}

Some notation and a few auxiliary results are needed before showing that the probability assignment to formulas induced by a stochastic valuation does indeed fulfil these four principles.

Given $U \subseteq A \subseteq B$, we use the abbreviation
$$\phi_A^U \quad\text{for}\quad
		\left(\bigwedge_{B_j \in U} B_j\right) \lconj \left(\bigwedge_{B_j \in A \setminus U} \lneg B_j\right).$$
Clearly, this formula identifies the $A$-valuation that makes each $B_j$ in $U$ true and each $B_j$ not in $U$ false.
Observe that, for each such $U$ and $A$, the set
$$\{v\cap A: v \satc \phi_A^U\}$$
is the singleton $\{U\}$. 
Remark also that the set $B_{\phi_A^U}$ of propositional symbols occurring in $\phi_A^U$ coincides with $A$.

\begin{prop}\em \label{prop:contUdif}
Let $U_1,U_2 \subseteq A \subseteq B$ be such that $U_1 \neq U_2$. Then
$$\entc \lneg (\phi_A^{U_1} \lconj \phi_A^{U_2})$$
\end{prop}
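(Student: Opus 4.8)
The plan is to reduce the claim to an unsatisfiability statement. Since $\entc \gamma$ abbreviates $\emptyset \entc \gamma$, which by definition means that every valuation satisfies $\gamma$, proving $\entc \lneg(\phi_A^{U_1} \lconj \phi_A^{U_2})$ is the same as showing that no valuation $v$ satisfies $\phi_A^{U_1} \lconj \phi_A^{U_2}$, that is, that the conjunction is unsatisfiable.

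To establish this I would invoke the singleton property recorded just before the statement: for any $U \subseteq A$ and any valuation $v$, one has $v \satc \phi_A^U$ exactly when $v \cap A = U$. Suppose, towards a contradiction, that some $v$ satisfies the conjunction. Then $v \satc \phi_A^{U_1}$ and $v \satc \phi_A^{U_2}$, whence $v \cap A = U_1$ and $v \cap A = U_2$; comparing the two gives $U_1 = U_2$, contradicting the hypothesis $U_1 \neq U_2$. Hence the conjunction is satisfied by no valuation, and its negation is valid.

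Equivalently, and perhaps more transparently at the syntactic level, since $U_1 \neq U_2$ their symmetric difference is non-empty, so I may fix a propositional symbol $B_j$ lying in exactly one of them, say $B_j \in U_1 \setminus U_2$ (the other case being symmetric). Because $B_j \in U_1$, the conjunct $B_j$ occurs in $\phi_A^{U_1}$, forcing $v(B_j)=1$ in any satisfying $v$; because $B_j \in A \setminus U_2$, the conjunct $\lneg B_j$ occurs in $\phi_A^{U_2}$, forcing $v(B_j)=0$. No valuation can do both, so $\phi_A^{U_1} \lconj \phi_A^{U_2}$ is contradictory.

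There is essentially no hard step here: the only bookkeeping is the symmetric-difference/without-loss-of-generality choice in the syntactic version, or the direct appeal to the already-established singleton characterization in the semantic version. I would favour the semantic argument, since it reuses machinery already in place and condenses the reasoning to a single line.
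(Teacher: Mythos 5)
Your proposal is correct and matches the paper's own argument: the paper also assumes a valuation satisfying both conjuncts, picks (without loss of generality) a symbol $B_j \in U_1 \setminus U_2$, and derives the contradiction $v \satc B_j$ and $v \satc \lneg B_j$, which is exactly your syntactic version. Your preferred semantic one-liner via the singleton observation $\{v \cap A : v \satc \phi_A^U\} = \{U\}$ is just a repackaging of the same idea, since that observation rests on the identical reasoning.
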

\begin{proof}\ \\
Let $v$ be a valuation. Assume, by contradiction,  that $v \satc \phi_A^{U_1} $ and $v \satc \phi_A^{U_2}$. 
Without loss of generality, let $B_j$ be a  symbol in $U_1$ but not in $U_2$. Then, $v \satc B_j$ and $v \satc \lneg B_j$
which is a contradiction.
\end{proof}

\begin{prop}\em \label{prop:disjtottaut}
Let $A \subseteq B$. Then
$$\entc \bigvee_{U \subseteq A} \phi_A^U.$$
\end{prop}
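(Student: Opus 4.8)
The plan is to unwind the definition of classical validity and exhibit, for each valuation, the single disjunct it satisfies. First I would record a well-formedness remark: the disjunction $\bigvee_{U \subseteq A} \phi_A^U$, and each $\phi_A^U$, is a genuine formula of $L$ only when $A$ is finite, so $A$ is tacitly a finite subset of $B$. In the degenerate case $A = \emptyset$ there is a single index $U = \emptyset$ and $\phi_\emptyset^\emptyset$ is the empty conjunction $\lverum$, whose validity is immediate; I would therefore assume $A$ non-empty for the remainder.

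Recalling that $\entc \gamma$ unwinds to ``$v \satc \gamma$ for every valuation $v$'' (the empty-hypotheses instance of the entailment defined at the start), I would fix an arbitrary valuation $v$ and produce a witnessing subset. Under the identification of a valuation with the set of symbols it makes true, the natural choice is $U := v \cap A$, the restriction of $v$ to $A$. With this $U$, every $B_j \in U$ satisfies $v \satc B_j$ by construction, while every $B_j \in A \setminus U$ has $v(B_j) = 0$ and hence $v \satc \lneg B_j$. Thus $v$ satisfies each conjunct of $\phi_A^U$, so $v \satc \phi_A^U$, and a fortiori $v$ satisfies the whole disjunction. Since $v$ was arbitrary, $\entc \bigvee_{U \subseteq A} \phi_A^U$ follows.

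I expect no serious obstacle here: this proposition is the exhaustiveness counterpart to the mutual exclusivity established in Proposition~\ref{prop:contUdif}, the two together expressing that the family $\{\phi_A^U : U \subseteq A\}$ partitions the space of valuations into the cells indexed by the $A$-valuations. The only points needing any care are the finiteness remark ensuring well-formedness and making the set-theoretic identification of $v$ explicit, so that the formula $U = v \cap A$ is meaningful and the verification of the two conjunctive blocks is unambiguous.
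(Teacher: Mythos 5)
Your proof is correct and is exactly the paper's argument: the paper's proof also fixes an arbitrary valuation $v$ and observes that $v \satc \phi_A^{v \cap A}$, so that $v$ satisfies the disjunction via the witness $U = v \cap A$. The additional remarks on finiteness of $A$ and the empty case are sensible housekeeping but do not change the substance.
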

\begin{proof}\ \\
Let $v$ be a valuation. Then, it is straightforward to see that $v \satc \phi_A^{v \cap A}$.
\end{proof}

\begin{prop}\em \label{prop:marglog}
Let $U' \subseteq A' \subseteq A \subseteq B$. Then
$$\entc \left(\bigvee_{\text{\shortstack[c]
{$U \subseteq A$\\
$U \cap A'=U'$
}
}}  \phi_A^U\right) \leqv \phi_{A'}^{U'}.$$
\end{prop}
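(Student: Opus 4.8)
The plan is to reduce the asserted semantic equivalence to the elementary set-theoretic fact that, when $A' \subseteq A$, restricting a valuation to $A'$ factors through restricting it to $A$. By the definition of classical entailment, establishing $\entc \psi \leqv \chi$ amounts to showing that every valuation $v$ satisfies $\psi$ exactly when it satisfies $\chi$. So I would fix an arbitrary valuation $v$ and compare the two sides of the claimed equivalence at $v$.

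First I would record the satisfaction behaviour of the building-block formulas. As already noted after the definition of $\phi_A^U$, the set $\{v \cap A : v \satc \phi_A^U\}$ is the singleton $\{U\}$; equivalently, $v \satc \phi_A^U$ holds \emph{iff} $v \cap A = U$, and likewise $v \satc \phi_{A'}^{U'}$ holds iff $v \cap A' = U'$. Reading off the semantics of the disjunction, the left-hand side is satisfied by $v$ iff there exists some $U \subseteq A$ with $U \cap A' = U'$ for which $v \satc \phi_A^U$, i.e.\ $v \cap A = U$. Since the only candidate witness is $U = v \cap A$, the left-hand side holds at $v$ iff the single condition $(v \cap A) \cap A' = U'$ is met. (The indexing set of the disjunction is in any case non-empty, as $U = U'$ satisfies $U' \cap A' = U'$ because $U' \subseteq A' \subseteq A$.)

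The last step is the only place where the hypothesis $A' \subseteq A$ is used: from $A' \subseteq A$ we get $(v \cap A) \cap A' = v \cap A'$, so the condition above collapses to $v \cap A' = U'$, which is precisely $v \satc \phi_{A'}^{U'}$. As $v$ was arbitrary, the two sides are satisfied by exactly the same valuations, and the equivalence is therefore valid in every valuation, giving the proposition. I expect no genuine obstacle here; the single substantive point is the identity $(v \cap A) \cap A' = v \cap A'$, which holds precisely because $A' \subseteq A$, everything else being routine unwinding of the definitions of $\phi_A^U$ and of the disjunction.
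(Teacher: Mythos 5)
Your proof is correct and follows essentially the same route as the paper's: fix an arbitrary valuation $v$, observe that $v \satc \phi_A^U$ forces (and is equivalent to) $v \cap A = U$, and reduce both directions to the identity $(v \cap A) \cap A' = v \cap A'$, which is exactly the witness computation in the paper's $(\from)$ direction. Your packaging as a single biconditional chain with the forced witness $U = v \cap A$ is a little more streamlined than the paper's literal-by-literal check in the $(\to)$ direction, but the underlying argument is the same.
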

\begin{proof}\ \\
Let $v$ be a valuation. \\
$(\to)$ 
Assume that $$v \satc \left(\bigvee_{\text{\shortstack[c]
{$U \subseteq A$\\
$U \cap A'=U'$
}
}}  \phi_A^U\right).$$
Then, $v \satc \phi_A^U$ for some $U \subseteq A$ such that $U \cap A'=U'$. Let $B_j \in U'$. Then, $B_j \in U \cap A'$ and so
$v \satc B_j$ since $v \satc \phi_A^U$. Let $B_j \in A' \setminus U'$. Then, $B_j \not \in U \cap A'$ and so $B_j \not \in U$. Hence, 
$v \not \satc B_j$ since $v \satc \phi_A^U$. Thus, $v \satc \phi_{A'}^{U'}$.\\[2mm]
$(\from)$ Assume that $v \satc \phi_{A'}^{U'}$. Observe that $v \satc \phi_A^{v \cap A}$. Moreover, $v \cap A \subseteq A$ and
$(v \cap A) \cap A' = v \cap A' =U'$ since $v \satc \phi_{A'}^{U'}$. 
\end{proof}
 
\begin{prop} \em\label{prop:mardisj}
Let $\delta,\phi$ be formulas and $V$ a stochastic valuation. Then
$$\displaystyle \sum_{U \in \{v \cap (B_\delta \cup B_\phi): v \satc \delta\}} \Prob(V_{B_\delta \cup B_\phi}=U)= \sum_{U'\in \{v \cap B_\delta: v \satc \delta\}} \Prob( V_{B_\delta}=U').$$
\end{prop}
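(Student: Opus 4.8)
The plan is to reduce the identity to the marginal condition for $V$ together with a purely set-theoretic rearrangement of the index sets. Write $A = B_\delta \cup B_\phi$ and $A' = B_\delta$, so that $A' \subseteq A$, with $A \in \pfwp B$ and $A' \in \pwp A$ in the generic case where $B_\delta$ and $B_\phi$ are both non-empty (the degenerate cases being handled by the usual conventions). Since the propositional symbols occurring in $\delta$ are exactly those in $A' = B_\delta$, satisfaction of $\delta$ by a valuation $v$ depends only on $v \cap A'$; in particular the right-hand index set $\{v \cap A' : v \satc \delta\}$ is precisely $\termvalue{\delta}$.

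First I would apply the marginal condition to each summand on the right: for every $U' \in \termvalue{\delta}$ it gives $\Prob(V_{A'} = U') = \sum_{U \subseteq A,\ U \cap A' = U'} \Prob(V_A = U)$, turning the right-hand side into the double sum $\sum_{U' \in \termvalue{\delta}} \sum_{U \subseteq A,\ U \cap A' = U'} \Prob(V_A = U)$. It then suffices to prove that the blocks $\{U \subseteq A : U \cap A' = U'\}$, as $U'$ ranges over $\termvalue{\delta}$, form a partition of the left-hand index set $\{v \cap A : v \satc \delta\}$ — the set-level analogue of the marginalisation in Proposition~\ref{prop:marglog}. Disjointness is immediate, since $U \cap A'$ is determined by $U$; and if $v \satc \delta$ then $U = v \cap A$ satisfies $U \cap A' = v \cap A' \in \termvalue{\delta}$, which yields one inclusion.

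The reverse inclusion is the only delicate step, and the place I expect the (mild) obstacle: given $U \subseteq A$ with $U \cap A' = U'$ for some $U' \in \termvalue{\delta}$, I must exhibit a valuation $v \satc \delta$ with $v \cap A = U$. Taking any $v$ agreeing with $U$ on $A$ (and arbitrary elsewhere) gives $v \cap A = U$ and $v \cap A' = U \cap A' = U'$; since $U' \in \termvalue{\delta}$ there is some $w \satc \delta$ with $w \cap A' = U'$, and because satisfaction of $\delta$ depends only on $v \cap B_\delta = v \cap A'$, the equality $v \cap A' = w \cap A'$ forces $v \satc \delta$. With the partition established, the double sum collapses to $\sum_{U \in \{v \cap A :\, v \satc \delta\}} \Prob(V_A = U)$, which is exactly the left-hand side. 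Everything beyond this dependence-on-$B_\delta$ fact is routine bookkeeping with the marginal condition.
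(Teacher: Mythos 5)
Your proof is correct and is essentially the paper's own argument: the paper likewise applies the marginal condition to the right-hand side with $A = B_\delta \cup B_\phi$, $A' = B_\delta$, and then collapses the resulting double sum by proving the index-set identity $\{U \subseteq A : U \cap B_\delta \in \termvalue{\delta}\} = \{v \cap A : v \satc \delta\}$, whose nontrivial inclusion is handled exactly as in your ``delicate step'' (extend $U$ to a valuation and invoke the fact that satisfaction of $\delta$ depends only on the restriction to $B_\delta$). Your phrasing of this as a partition of the left-hand index set is only a cosmetic repackaging of the same computation.
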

\begin{proof}\ \\
Observe that:
\begin{align*} \displaystyle \sum_{U'\in \{v \cap B_\delta: v \satc \delta\}} \Prob( V_{B_\delta}=U') 
\displaybreak[1] \\[1mm]
& \hspace*{-40mm} \displaystyle = \sum_{U'\in \{v \cap B_\delta: v \satc \delta\}}\sum_{\text{\shortstack[c]
{$U \subseteq B_\delta \cup B_\phi$\\
$U \cap B_\delta=U'$
}
}} 
\Prob(V_{B_\delta \cup B_\phi}=U) & (*) \displaybreak[1] \\[1mm]
& \hspace*{-40mm} \displaystyle = \sum_{\text{\shortstack[c]
{$U \subseteq B_\delta \cup B_\phi$\\
$U \cap B_\delta \in \{v \cap B_\delta: v \satc \delta\}$
}
}} 
\Prob(V_{B_\delta \cup B_\phi}=U)  \displaybreak[1] \\[1mm]
& \hspace*{-40mm} \displaystyle =\sum_{U \in \{v \cap (B_\delta \cup B_\phi): v \satc \delta\}} \Prob(V_{B_\delta \cup B_\phi}=U) & (**) 
\end{align*}
where $(*)$ follows by the marginal condition and $(**)$ is proved now. Indeed
$$U \subseteq B_\delta \cup B_\phi \text{ and } U \cap B_\delta \in \{v \cap B_\delta: v \satc \delta\} \; \text{ iff } \;
U \in \{v \cap (B_\delta \cup B_\phi): v \satc \delta\}$$ since:\\[1mm]
$(\to)$ Assume that $U \subseteq B_\delta \cup B_\phi \text{ and } U \cap B_\delta \in \{v \cap B_\delta: v \satc \delta\}$. Let $v$ be such that $U \cap B_\delta= v \cap B_\delta$ and $v \satc \delta$. Let $v'$ be such that $v' \cap B_\delta = v \cap B_\delta$ and 
$v' \cap (B_\delta \cup B_\phi)= U$. Then, $v' \satc \delta$. Therefore, $U \in \{v \cap (B_\delta \cup B_\phi): v \satc \delta\}$.
\\[2mm]
$(\from)$ Assume that 
$U \in \{v \cap (B_\delta \cup B_\phi): v \satc \delta\}$. Let $v$ be such that $U= v \cap (B_\delta \cup B_\phi)$ and $ v\satc \delta$. Thus,
$U \subseteq B_\delta \cup B_\phi$. 
Moreover 
$U \cap B_\delta= v \cap (B_\delta \cup B_\phi) \cap B_\delta=v \cap B_\delta$. Hence
$U \cap B_\delta \in \{v \cap B_\delta: v \satc \delta\}$.
\end{proof}

\begin{prop} \em\label{prop:monprobs}
Let $\delta, \alpha$ be formulas such that $\delta \entc \alpha$. Then
$$\Prob_{V}(\delta) \leq \Prob_{V}(\alpha).$$
\end{prop}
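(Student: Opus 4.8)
The plan is to reduce both probabilities to sums indexed by a single common set of propositional symbols and then to exploit the entailment together with the nonnegativity of the probabilities. Set $C = B_\delta \cup B_\alpha$. The obstacle to a direct comparison is that $\Prob_{V}(\delta)$ and $\Prob_{V}(\alpha)$ are defined as sums over $\wp B_\delta$ and $\wp B_\alpha$ respectively, and these index sets need not coincide, so the two sums cannot be matched term by term as they stand.

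To overcome this I would use Proposition~\ref{prop:mardisj} as a marginalization device that lifts each probability to $C$ without changing its value. Taking $\phi=\alpha$ there, and recalling that $\termvalue{\delta}=\{v \cap B_\delta: v \satc \delta\}$ and that $B_\delta \cup B_\alpha = C$, yields
$$\Prob_{V}(\delta) = \sum_{U \in \{v \cap C: v \satc \delta\}} \Prob(V_C = U).$$
Applying the same proposition with $\delta$ and $\phi$ instantiated by $\alpha$ and $\delta$, and using $B_\alpha \cup B_\delta = C$, gives symmetrically
$$\Prob_{V}(\alpha) = \sum_{U \in \{v \cap C: v \satc \alpha\}} \Prob(V_C = U).$$
Now both quantities are sums of the same nonnegative numbers $\Prob(V_C = U)$, differing only in the index set over which they range.

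It then remains to compare the two index sets. If $U \in \{v \cap C: v \satc \delta\}$, choose a valuation $v$ with $v \satc \delta$ and $v \cap C = U$; since $\delta \entc \alpha$ we obtain $v \satc \alpha$, whence $U \in \{v \cap C: v \satc \alpha\}$. Therefore $\{v \cap C: v \satc \delta\} \subseteq \{v \cap C: v \satc \alpha\}$, and because each summand $\Prob(V_C = U)$ is nonnegative, summing over the smaller set gives at most the sum over the larger one, i.e. $\Prob_{V}(\delta) \leq \Prob_{V}(\alpha)$, as required.

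I expect the only genuine difficulty to lie in the first reduction: recognizing that the mismatch between $B_\delta$ and $B_\alpha$ is exactly what Proposition~\ref{prop:mardisj} is designed to repair, by re-expressing each probability over the common set $C$. Once both probabilities are written over $\wp C$, the conclusion is just the elementary monotonicity of a sum of nonnegative terms under enlargement of its index set.
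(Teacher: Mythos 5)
Your proof is correct and follows essentially the same route as the paper's: both arguments lift the two sums to the common symbol set $B_\delta \cup B_\alpha$, use $\delta \entc \alpha$ to obtain an inclusion of the index sets, and conclude by nonnegativity of the summands. The only difference is packaging --- you invoke Proposition~\ref{prop:mardisj} twice as a black box, whereas the paper inlines the corresponding marginal-condition computation directly, so your version is slightly more economical.
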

\begin{proof}\ \\
Observe that
\begin{align*}
\Prob_{V}(\delta) = &\;   \displaystyle \sum_{U' \in \termvalue{\delta}} \Prob(V_{B_\delta} =U') \displaybreak[1] \\[2mm]
                                 = & \;  \displaystyle \sum_{U' \in \termvalue{\delta}}  \quad \sum_{\text{\shortstack[c]
                                           {$U \subseteq  B_\delta \cup B_\alpha$\\
                                               $U \cap B_\delta = U'$
}
}} 
 \Prob(V_{B_\delta \cup B_\alpha} =U) & (*)\displaybreak[1] \\[2mm]
                                = & \;   \displaystyle         \sum_{\text{\shortstack[c]
                                           {$U \subseteq  B_\delta \cup B_\alpha$\\
                                               $U \cap B_\delta\in \termvalue{\delta}$
}
}}  \Prob(V_{B_\delta \cup B_\alpha} =U)\displaybreak[1] \\[2mm]                                           
                                \leq &   \;                \displaystyle         \sum_{\text{\shortstack[c]
                                           {$U \subseteq  B_\delta \cup B_\alpha$\\
                                               $U \cap B_\alpha\in \termvalue{\alpha}$
}
}}  \Prob(V_{B_\delta \cup B_\alpha} =U) & (**) \displaybreak[1]  \\[2mm]
                                  = &  \;  \displaystyle \sum_{U' \in \termvalue{\alpha}}  \quad \sum_{\text{\shortstack[c]
                                           {$U \subseteq  B_\delta \cup B_\alpha$\\
                                               $U \cap B_\alpha = U'$
}
}} 
 \Prob(V_{B_\delta \cup B_\alpha} =U)\displaybreak[1]  \\[2mm]
                                = & \;   \displaystyle \sum_{U' \in \termvalue{\alpha}} \Prob(V_{B_\alpha} =U') &  (*)\displaybreak[1] \\[2mm]
                                = & \; \Prob_{V}(\alpha)
\end{align*}
where $(*)$ follow from the marginal condition on $V$ and $(**)$ comes from the fact that
$$\{U: U \subseteq B_\delta \cup B_\alpha, U \cap B_\delta \in \termvalue{\delta}\} 
\subseteq \{U: U \subseteq B_\delta \cup B_\alpha, U \cap B_\alpha \in \termvalue{\alpha}\}$$
as we now show.
Let $U \subseteq B_\delta \cup B_\alpha$ be such that $U \cap B_\delta \in \termvalue{\delta}$. Let $v$ be a valuation such that
$$v \cap (B_\delta \cup B_\alpha)=U.$$
Then, $v \cap B_\delta= v \cap (B_\delta \cup B_\alpha) \cap B_\delta= U\cap B_\delta$ and so 
$v \cap B_\delta \in  \termvalue{\delta}$. Hence, $v \satc \delta$ and thus $$v \satc \alpha$$ since $\delta \entc \alpha$.
Therefore, $v \cap B_\alpha \in \termvalue{\alpha}$. Since
$$v \cap B_\alpha= v \cap (B_\delta \cup B_\alpha) \cap B_\alpha= U\cap B_\alpha$$
then $U \cap B_\alpha \in \termvalue{\alpha}$.
\end{proof}

\begin{prop} \em\label{prop:cvalimplicaprobigualaum}
Given a formula $\alpha$ and a stochastic valuation $V$,
$$\entc\alpha \quad \text{ implies } \quad \Prob_{V}(\alpha)=1.$$
\end{prop}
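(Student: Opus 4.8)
The plan is to reduce the claim to the single normalization fact that, for every non-empty finite $A\subseteq B$, the induced finite-dimensional probabilistic valuation sums to $1$, that is $\sum_{U\subseteq A}\Prob(V_A=U)=1$. Granting this, the statement follows once I observe that the validity of $\alpha$ forces $\termvalue{\alpha}$ to be the entire power set of $B_\alpha$.

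First I would unfold the definition of $\termvalue{\alpha}$. Since every formula of $L$ is built from propositional symbols using $\lneg$ and $\limp$, the set $B_\alpha$ is non-empty and finite, so $B_\alpha\in\pfwp B$ and $\Prob(V_{B_\alpha}=U)$ is defined for every $U\subseteq B_\alpha$. If $\entc\alpha$, then every valuation satisfies $\alpha$, whence $\termvalue{\alpha}=\{v\cap B_\alpha: v\satc\alpha\}=\{v\cap B_\alpha: v\text{ a valuation}\}$. For an arbitrary $U\subseteq B_\alpha$, taking the valuation $v=U$ (recall that valuations are identified with subsets of $B$) yields $v\cap B_\alpha=U$, so $U\in\termvalue{\alpha}$; the reverse inclusion is immediate since always $v\cap B_\alpha\subseteq B_\alpha$. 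Hence $\termvalue{\alpha}=\wp B_\alpha$ and therefore $\Prob_V(\alpha)=\sum_{U\subseteq B_\alpha}\Prob(V_{B_\alpha}=U)$.

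It then remains to show that this last sum equals $1$. Writing $B_\alpha=\{B_{j_1},\dots,B_{j_n}\}$, I would set $E_U=\bigcap_{k=1}^n\inv{(V_{B_{j_k}})}(U\cap\{B_{j_k}\})\in\cF$ for each $U\subseteq B_\alpha$, so that $\Prob(V_{B_\alpha}=U)=\mea(E_U)$ by definition. The key observation is that $\{E_U: U\subseteq B_\alpha\}$ is a partition of $\Omega$ into events: the $E_U$ are pairwise disjoint, because distinct $U_1,U_2$ differ at some coordinate $B_{j_k}$, where the preimages $\inv{(V_{B_{j_k}})}(\emptyset)$ and $\inv{(V_{B_{j_k}})}(\{B_{j_k}\})$ are disjoint; and they cover $\Omega$, since any $\omega$ lies in $E_U$ for $U=\{B_{j_k}: V_{B_{j_k}}(\omega)=\{B_{j_k}\}\}$. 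Finite additivity of $\mea$ together with $\mea(\Omega)=1$ would then give $\sum_{U\subseteq B_\alpha}\mea(E_U)=\mea(\Omega)=1$, which completes the argument.

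I do not expect a genuine obstacle: the only point requiring care is this normalization step, and the substantive content is the reduction $\termvalue{\alpha}=\wp B_\alpha$ together with the recognition that the distribution induced by $V$ on $B_\alpha$ is a bona fide probability distribution. If one prefers to stay closer to the machinery already developed, the normalization can instead be obtained by induction on $|B_\alpha|$: the base case $|A|=1$ is precisely $\mea(\inv{(V_{B_j})}(\emptyset))+\mea(\inv{(V_{B_j})}(\{B_j\}))=\mea(\Omega)=1$, and the induction step collapses the sum over $A=A'\cup\{B_m\}$ to the sum over $A'$ via the marginal condition, using that each $U\subseteq A$ has a unique trace $U\cap A'$.
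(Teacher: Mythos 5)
Your proof is correct and takes essentially the same route as the paper's: both hinge on showing $\termvalue{\alpha}=\wp B_\alpha$ (via a valuation $v$ with $v\cap B_\alpha=U$ for each $U\subseteq B_\alpha$) and then summing over all of $\wp B_\alpha$. The only difference is that you explicitly verify the normalization $\sum_{U\subseteq B_\alpha}\Prob(V_{B_\alpha}=U)=1$ via the partition $\{E_U\}$ of $\Omega$, a fact the paper treats as immediate since $V_{B_\alpha}$ induces a probability distribution; this extra care is harmless and arguably welcome.
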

\begin{proof}\ \\
Assume $\entc\alpha$. Then, 
$$\termvalue{\alpha}=\wp B_\alpha.$$ Indeed it is immediate that
$\termvalue{\alpha}\subseteq \wp B_\alpha$.
For the other direction, let $U \subseteq B_\alpha$. Pick a valuation $v$ such that
$v \cap B_\alpha =U$. Then, $U \in \termvalue{\alpha}$ since $v \satc \alpha$.\\[1mm]
Hence
$$\Prob_{V}(\alpha)=\sum_{U \in \termvalue{\alpha}} \Prob(V_{B_\alpha}=U)=\sum_{U \subseteq B_\alpha} \Prob(V_{B_\alpha}=U)= 1.$$
\end{proof}

With these results in hand we are ready to show that every stochastic valuation assigns probabilities to formulas fulfilling Adams' principles. 

\begin{pth}\em\label{th:probassV}
Let $V$ be a stochastic valuation. Then, $\PSV{V}=\Prob_{V}$ is a probability assignment.
\end{pth}
\begin{proof}\ \\
Indeed, all the properties of probability assignments are satisfied:\\[2mm]
P1~Direct from the fact that $V_{B_\alpha}$ is a probability distribution for every $\alpha$.\\[2mm]
P2~Follows immediately from Proposition~\ref{prop:cvalimplicaprobigualaum}.\\[2mm]
P3.~This fact was proved in Proposition~\ref{prop:monprobs}.\\[2mm]
P4~Assume that $\entc \lneg (\beta \lconj \alpha)$. Then, there is no valuation $v$ such that $v \satc \beta$ and $v \satc \alpha$.
Hence,
\begin{align*}
\Prob_{V}(\beta \ldisj \alpha) = &\;  \displaystyle \sum_{U \in \termvalue{\beta \ldisj \alpha}} \Prob(V_{B_\beta \cup B_\alpha}=U) \displaybreak[1] \\[2mm]
                                             =&  \;\displaystyle \sum_{U\in \{v \cap (B_\beta \cup B_\alpha): v \satc \beta \ldisj \alpha\}} \Prob(V_{B_\beta \cup B_\alpha}=U) \displaybreak[1] \\[2mm]
                                             =& \; \displaystyle \sum_{U \in \{v \cap (B_\beta \cup B_\alpha): v \satc \beta \text{ or } v \satc \alpha\}} \Prob(V_{B_\beta \cup B_\alpha}=U) \displaybreak[1] \\[2mm]
                                              =& \;  \displaystyle \sum_{U \in \{v \cap (B_\beta \cup B_\alpha): v \satc \beta\}} \Prob(V_{B_\beta \cup B_\alpha}=U)
                                              	+  \dummy \displaybreak[1] \\[2mm]
                                            &\hspace*{19mm}  \displaystyle \sum_{U \in \{v \cap (B_\beta \cup B_\alpha): v \satc \alpha\}} \Prob(V_{B_\beta \cup B_\alpha}=U)  \displaybreak[1] \\[2mm]
  =& \;  \displaystyle \sum_{U \in \{v \cap B_\beta: v \satc \beta\}} \Prob(V_{B_\beta}=U)
 						+  \dummy \displaybreak[1] \\[2mm]
                                            &\hspace*{19mm} 
                                                   \displaystyle \sum_{U \in \{v \cap B_\alpha: v \satc \alpha\}} \Prob(V_{B_\alpha}=U) & (*) \displaybreak[1] \\[2mm]
                                          =& \;  \displaystyle \sum_{U \in \termvalue{\beta}} \Prob(V_{B_\beta}=U) +
                                                     \displaystyle \sum_{U \in \termvalue{\alpha}} \Prob(V_{B_\alpha}=U) \displaybreak[1] \\[2mm]
                                          =& \; \Prob_{V}(\beta) + \Prob_{V}(\alpha)                                                               
\end{align*}
where $(*)$ follows by Proposition~\ref{prop:mardisj}.
\end{proof}

We now show the converse result: each probability assignment induces a stochastic valuation giving back the original assignment. 
To this end, we first spell-out the family of finite-dimensional probabilistic valuations induced by a probability assignment and show that it fulfils the marginal condition. 
Afterwards, the envisaged stochastic valuation is obtained using Kolmogorov's existence theorem.

Given a probability assignment $P$, let
$$\fdd^P = \{\fdd^P_A = U \mapsto P(\phi_A^U): \wp A \to [0,1]\}_{A \in \pfwp B}.$$

\begin{prop} \em
Let $P$ be a probability assignment. There exists a unique stochastic valuation 
$$\SVP{P}$$
such that  
$\displaystyle \Prob(\SVP{P}_A=U)=P(\phi_A^U).$ 
\end{prop}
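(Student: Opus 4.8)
The plan is to apply the Kolmogorov existence theorem to the family $\fdd^P$, exactly as in the construction used just before this proposition. For this it suffices to check two things: that for each $A \in \pfwp B$ the map $\fdd^P_A\colon \wp A \to [0,1]$ is a finite probability distribution, and that the whole family $\fdd^P$ satisfies the marginal condition. Once both are verified, Kolmogorov's theorem produces a \emph{unique} stochastic valuation whose finite-dimensional distributions are exactly the $\fdd^P_A$, that is a unique $\SVP{P}$ with $\Prob(\SVP{P}_A = U) = \fdd^P_A(U) = P(\phi_A^U)$, which is precisely the assertion.

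The workhorse for both verifications is a finite-additivity fact that I would isolate first: if $\gamma_1, \dots, \gamma_n$ are pairwise mutually exclusive, meaning $\entc \lneg(\gamma_i \lconj \gamma_j)$ whenever $i \neq j$, then $P(\gamma_1 \ldisj \cdots \ldisj \gamma_n) = \sum_{i=1}^n P(\gamma_i)$. This follows by induction on $n$ from principle P4. The base case is immediate, and for the inductive step one takes $\beta = \gamma_1 \ldisj \cdots \ldisj \gamma_{n-1}$ and $\alpha = \gamma_n$, observing that $\entc \lneg(\beta \lconj \alpha)$ because $\beta \lconj \gamma_n$ is classically equivalent to $\bigvee_{i<n}(\gamma_i \lconj \gamma_n)$ and each disjunct is unsatisfiable by hypothesis; P4 then gives $P(\beta \ldisj \gamma_n) = P(\beta) + P(\gamma_n)$, and the induction hypothesis finishes. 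By Proposition~\ref{prop:contUdif} the formulas $\{\phi_A^U : U \subseteq A\}$ are pairwise mutually exclusive, so this lemma applies to any subfamily of them.

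With the lemma in hand, the first verification is short. Principle P1 gives $\fdd^P_A(U) = P(\phi_A^U) \in [0,1]$. For the total mass, Proposition~\ref{prop:disjtottaut} gives $\entc \bigvee_{U \subseteq A} \phi_A^U$, so P2 yields $P(\bigvee_{U \subseteq A}\phi_A^U) = 1$, and the additivity lemma rewrites this as $\sum_{U \subseteq A} P(\phi_A^U) = \sum_{U \subseteq A}\fdd^P_A(U) = 1$; since $\wp A$ is finite, only finite additivity is needed. For the marginal condition, fix $U' \subseteq A' \subseteq A$ with $A \in \pfwp B$ and $A' \in \pwp A$. Proposition~\ref{prop:marglog} supplies the equivalence $\entc (\bigvee_{U \subseteq A,\, U \cap A' = U'} \phi_A^U) \leqv \phi_{A'}^{U'}$, which means each side entails the other, so P3 applied in both directions gives $P(\phi_{A'}^{U'}) = P(\bigvee_{U \subseteq A,\, U \cap A' = U'} \phi_A^U)$; applying the additivity lemma to the disjunction on the right then yields $P(\phi_{A'}^{U'}) = \sum_{U \subseteq A,\, U \cap A' = U'} P(\phi_A^U)$, which is exactly the marginal condition for $\fdd^P$.

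Having checked both conditions, I would invoke the Kolmogorov existence theorem to obtain the required unique $\SVP{P}$. I expect the only genuine work to be the finite-additivity lemma together with the correct handling of the equivalence-to-equality step, namely turning the classical equivalences furnished by Propositions~\ref{prop:disjtottaut} and~\ref{prop:marglog} into numerical identities via P2 and P3; the existence and uniqueness of the process are then delegated entirely to Kolmogorov's theorem, and the restriction to finite $A$ means that no measure-theoretic subtlety about countable additivity arises at the level of the individual $\fdd^P_A$.
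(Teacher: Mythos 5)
Your proposal is correct and follows essentially the same route as the paper: verify that each $\fdd^P_A$ is a finite probability distribution (P1 for the range, Proposition~\ref{prop:disjtottaut} with P2 for total mass), verify the marginal condition via Proposition~\ref{prop:marglog} and P3, and invoke Kolmogorov's existence theorem for existence and uniqueness. The only difference is presentational: you spell out as an explicit induction the finite-additivity lemma that the paper compresses into the citations ``Proposition~\ref{prop:contUdif} and P4,'' which is a worthwhile clarification but not a different argument.
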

\begin{proof}\ \\
(1)~Each $\fdd^P_A$ is a finite-dimensional probabilitistic valuation: \\[2mm] 
(a) $\fdd^P_A(U)\in [0,1]$. Follows immediately from P1.\\[2mm]
(b) $\sum_{U \subseteq A} \fdd^P_A(U) = 1$. Indeed: 
\begin{align*}
\displaystyle \sum_{U \subseteq A} \fdd^P_A(U) =& \; \displaystyle \sum_{U \subseteq A} P(\phi_A^U)  \displaybreak[1] \\[2mm]
                                                                             =& \; \displaystyle P\left(\bigvee_{U \subseteq A} \phi_A^U\right) & (*) \displaybreak[1] \\[2mm]
                                                                             =&  \;  1& (**)
\end{align*}
where $(*)$ follows from Proposition~\ref{prop:contUdif} and P4 and $(**)$ follows from Proposition~\ref{prop:disjtottaut} and P2.\\[2mm]
(c)~Additivity is trivial since we are dealing with a measure over a finite set of outcomes.\\[2mm]
(2)~The family $\fdd^P$ fulfils the marginal condition. Assume that $A' \subseteq A$ and $U' \subseteq A'$. Then, 
\begin{align*}
\displaystyle \sum_{\text{\shortstack[c]
{$U \subseteq A$\\
$U \cap A'=U'$
}
}} \fdd^P_A(U)         =& \; \displaystyle \sum_{\text{\shortstack[c]
{$U \subseteq A$\\
$U \cap A'=U'$
}
}} P(\phi_A^U) \displaybreak[1] \\[2mm]
                              =& \; \displaystyle P\left(\bigvee_{\text{\shortstack[c]
{$U \subseteq A$\\
$U \cap A'=U'$
}
}}  \phi_A^U\right) & (*)\displaybreak[1] \\[2mm]
                             =& \; P(\phi_{A'}^{U'}) & (**)\displaybreak[1] \\[2mm]
                             =& \;  \fdd^P_{A'}(U')                           
\end{align*}
where $(*)$ follows from Proposition~\ref{prop:contUdif} and P4 and $(**)$ follows from Proposition~\ref{prop:marglog} and P3.\\[2mm]
Hence, using Kolmogorov's existence theorem, there exists a unique stochastic valuation having these finite-dimensional distributions. Let $\SVP{P}$ be this stochastic valuation.
\end{proof}

We now proceed to show that $\SVP{P}$ induces back the original probability assignment $P$
and, conversely, that a stochastic valuation $V$ induces the probability assignment $\PSV{V}$ that gives back the original $V$. To this end, we need the following auxiliary result.

\begin{prop}\em \label{prop:normdisj}
Let $U' \subseteq A' \subseteq A \subseteq B$ and $\beta$ a formula in $L$. Then
$$\entc \left(\bigvee_{U \in \{v \cap B_\beta: v \satc \beta\}} \phi_{B_\beta}^U\right) \leqv \beta.$$
\end{prop}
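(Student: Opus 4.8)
The plan is to fix an arbitrary valuation $v$ and prove the two directions of the biconditional separately, reducing everything to the observation that, by definition, $\termvalue{\beta}=\{v \cap B_\beta : v \satc \beta\}$ is exactly the index set of the displayed disjunction. The engine throughout is that each $\phi_{B_\beta}^U$ is a complete description of the symbols in $B_\beta$: its unique $B_\beta$-valuation is $U$, so a valuation satisfies $\phi_{B_\beta}^U$ precisely when its restriction to $B_\beta$ equals $U$. (Note that the chain $U' \subseteq A' \subseteq A \subseteq B$ in the hypothesis plays no role here; only $\beta$, and hence $B_\beta$, is relevant.)

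For the forward direction $(\to)$ I would assume $v \satc \beta$ and put $U := v \cap B_\beta$. Then $U \in \termvalue{\beta}$, so $U$ is one of the disjuncts, and as already recorded in the proof of Proposition~\ref{prop:disjtottaut} (specialized to $A = B_\beta$) one has $v \satc \phi_{B_\beta}^{v \cap B_\beta}$. Hence $v$ satisfies the whole disjunction.

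For the backward direction $(\from)$ I would assume $v$ satisfies some disjunct $\phi_{B_\beta}^U$ with $U \in \termvalue{\beta}$. Satisfying $\phi_{B_\beta}^U$ forces $v \cap B_\beta = U$, and $U \in \termvalue{\beta}$ supplies a valuation $v'$ with $v' \satc \beta$ and $v' \cap B_\beta = U$; thus $v$ and $v'$ agree on $B_\beta$. The only genuine content is then the coincidence lemma: the truth value of $\beta$ under a valuation depends only on the symbols in $B_\beta$, so $v \cap B_\beta = v' \cap B_\beta$ yields $v \satc \beta \iff v' \satc \beta$, whence $v \satc \beta$. This is a standard structural fact about classical propositional logic, provable by a routine induction on the construction of $\beta$ from $\lneg$ and $\limp$, and I would either invoke it as such or dispatch it with a one-line induction. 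Everything else is bookkeeping about $\phi_{B_\beta}^U$ and the definition of $\termvalue{\beta}$, so I expect the coincidence lemma to be the main (and only real) obstacle.
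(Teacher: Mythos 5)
Your proof is correct and takes essentially the same route as the paper's: both directions reduce to the observation that $v \satc \phi_{B_\beta}^U$ forces $v \cap B_\beta = U$, together with the coincidence lemma, which the paper's step ``therefore $U = v\cap B_\beta$ and so $v \satc \beta$'' uses silently but you make explicit. You are also right that the hypothesis $U' \subseteq A' \subseteq A \subseteq B$ is vestigial and plays no role in the statement or proof.
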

\begin{proof}\ \\
Let $v$ be a valuation. \\[1mm]
$(\to)$ Assume that
$$v \satc \left(\bigvee_{U \in \{v \cap B_\beta: v \satc \beta\}} \phi_{B_\beta}^U\right).$$
Let $U \in \{v \cap B_\beta: v \satc \beta\}$ be such that 
$$v \satc \phi_{B_\beta}^U.$$
Then, there is $v'$ such that $U=v' \cap B_\beta$ and $v' \satc \beta$. Hence
$$v \satc \phi_{B_\beta}^{v' \cap B_\beta} \quad (\dag).$$
We now show that $v' \cap B_\beta = v \cap B_\beta$. Let $B_j \in v' \cap B_\beta$. Then, $v \satc B_j$ by $(\dag)$ and so
 $B_j \in v \cap B_\beta$. For the other direction let $B_j \in v \cap B_\beta$. By $(\dag)$, $B_j \in v'$. Hence
 $B_j \in v' \cap B_\beta$. \\[1mm]
 Therefore,
 $$U=v \cap B_\beta$$
 and so $v \satc \beta$.\\[2mm]
 $(\from)$ Assume that $v' \satc \beta$. Observe that $v' \satc \phi_{B_\beta}^{v' \cap B_\beta}$.
Then, $v' \cap B_\beta \in \{v \cap B_\beta: v \satc \beta\}$ and so the thesis follows.
\end{proof}

\begin{pth}\em
Let $V$ be a stochastic valuation and $P$ a probability assignment. 
Then,
$$\SVP{\PSV{V}}=V \quad \text{and} \quad \PSV{\SVP{P}}=P.$$
\end{pth}
\begin{proof}\ \\
Let $A \in \pfwp B$ and $U \subseteq A$. Observe that we have:
\begin{align*} \displaystyle
\Prob(\SVP{\PSV{V}}_A=U) =& \; \PSV{V}(\phi_A^U) \displaybreak[1] \\[2mm]
                                     =& \; \Prob_V(\phi_A^U) \displaybreak[1] \\[2mm]
                                     =&  \; \displaystyle \sum_{U' \in \termvalue{\phi_A^U}} \Prob(V_{B_{\phi_A^U}}=U')\displaybreak[1] \\[2mm]
                                     =& \; \Prob(V_{B_{\phi_A^U}}=U) \displaybreak[1] \\[2mm]
                                    =& \; \Prob(V_{A}=U).
\end{align*}
Therefore, the stochastic valuations $\SVP{\PSV{V}}$ and $V$ have the same finite-dimensional probabilistic valuations and, so, by the Kolmogorov's existence theorem, they are equivalent.\\[2mm]
Moreover, let $\beta\in L$. Then:
\begin{align*}
\PSV{\SVP{P}}(\beta) =\Prob_{\SVP{P}}(\beta) =& \; \displaystyle \sum_{U \in \termvalue{\beta}} \Prob(\SVP{P}_{B_\beta}=U)\displaybreak[1] \\[2mm]
                                     =& \;  \ds \sum_{U \in \{v \cap B_\beta: v \satc \beta\}} P(\phi_{B_\beta}^U)\displaybreak[1] \\[2mm]
                                     =&  \; \ds P\left(\bigvee_{U \in \{v \cap B_\beta: v \satc \beta\}} \phi_{B_\beta}^U\right) &
                                     														 (*)\displaybreak[1] \\[2mm]
                                     =& \; P(\beta) & (**)                                     
\end{align*}
where $(*)$ follows from Proposition~\ref{prop:contUdif} and P4 and $(**)$ is a consequence of Proposition~\ref{prop:normdisj} and P3.
\end{proof}

In short, there is a strict Galois connection between stochastic valuations and probability assignments to (classical) formulas.
Therefore, we can freely choose to assign probabilities to formulas or to valuations. 
In the remainder of this paper we adopt the latter approach, using stochastic valuations for the purpose.

\section{Probabilistic entailment}\label{sec:pent}

In this section, we compare
the entailment in $\CPL$ (classical  propositional logic with valuations as semantics)
with
the probabilistic entailments that we are able to define in $\svPL$ (a variant of $\CPL$ with the same language but adopting stochastic valuations as semantics). 
The key result of this section is the collapse of these probabilistic entailments into the classical entailment.

It is possible to define in $\svPL$ a family $\entpq{p}{q}$ of probabilistic entailments depending on the minimal probability $p$ required from the hypotheses in order to obtain the conclusion with at least probability $q$. 
To this end, we first define satisfaction by a stochastic valuation of a formula with a minimal probability $p$.

Let $V$ be a stochastic valuation, $\alpha \in L$ and $p \in [0,1]$. We say that $\alpha$ is {\it $p$-satisfied} by  $V$, written
$$V \satp{p} \alpha,$$ 
whenever $\Prob_{V}(\alpha) \geq p$. 
That is, a formula is $p$-satisfied by $V$ whenever its probability under $V$ is at least $p$.

According to Hailperin~\cite{hai:96,hai:11}, 
given $\Delta \cup\{\alpha\} \subseteq L$ and $p,q \in [0,1]$,
one would say that $\Delta$ $pq$-entails $\alpha$, written here
$$\Delta \hentpq{p}{q} \alpha,$$
whenever, for every stochastic valuation $V$,
$$\text{if } V \satp{p} \delta \text{ for every } \delta \in \Delta \text{ then } V \satp{q} \alpha.$$
That is, if the probability under $V$ of each hypothesis is at least $p$ then the probability under $V$ of the conclusion is at least $q$.

However, we find this definition wanting since $\hentpq{p}{q}$ does not enjoy the following desirable property:
$$\delta_1,\delta_2 \hentpq{p}{q} \alpha \quad\text{iff}\quad \delta_1\lconj\delta_2 \hentpq{p}{q} \alpha.$$
Indeed, for instance,
$$B_1 \lconj (\lneg B_1) \hentpq{\frac12}{\frac14} \lfalsum$$
while
$$B_1 ,\lneg B_1 \not\hentpq{\frac12}{\frac14} \lfalsum.$$
The former holds vacuously because $\tv{B_1 \lconj (\lneg B_1)}=\emptyset$.
Concerning the latter, observe that 
it is easy to find a stochastic valuation $V$ such that 
$\Prob_V(B_1)=\Prob(\lneg B_1)=\frac12$ 
and note that every stochastic valuation assigns probability zero to $\lfalsum$
since $\tv\lfalsum=\emptyset$. In fact, Hailperin's concept requires that both $\delta_1$ and $\delta_2$ have probability greater than or equal to $p$ of being true when one should instead require that the probability of them being simultaneously true is greater than or equal to $p$.
 
In order to overcome this difficulty, we propose to use the following notion of probabilistic entailment where, as usual, for any finite set $\Phi$ of formulas, 
we write $\bigwedge \Phi$ for the conjunction of the formulas in $\Phi$, with $\bigwedge\emptyset$ standing for $\lverum$.

Let $\Delta \cup\{\alpha\} \subseteq L$ and $p,q \in (0,1]$ such that $p \geq q$.
We say that $\Delta$ {\it $pq$-entails} $\alpha$, written
$$\Delta \entpq{p}{q} \alpha,$$
whenever there is a finite subset $\Phi$ of $\Delta$ such that,
for every stochastic valuation $V$,
$$\text{if } V \satp{p} \bigwedge \Phi \text{ then } V \satp{q} \alpha.$$
Clearly,
$$\Delta \entpq{p}{q} \alpha \quad \text{ iff } \quad 
	 \exists\, \Phi {\in} \fwp\Delta: \bigwedge\Phi  \hentpq{p}{q}\alpha.$$
Thus, when $\Delta$ is a singleton or the empty set the two definitions coincide.
	 
Note that the requirement $q > 0$ is well justified because $ \entpq{p}{0}$ is trivial. Indeed, every formula is $p0$-entailed by any set of hypotheses since $\Prob_V(\alpha)\geq0$ for every stochastic valuation $V$ and formula $\alpha$.

Observe also that the requirement $p \geq q$ is essential since otherwise the induced {\it $pq$-entailment operator}
$$\Delta \mapsto \Delta^{\entpq{p}{q}}=\{\alpha\in L: \Delta \entpq{p}{q} \alpha\}: \wp L \to \wp L$$
would not be extensive. Indeed, for instance,
$$B_1 \not\entpq{\frac14}{\frac34} B_1.$$

It is straightforward to verify that the $pq$-entailment operator is extensive if $p \geq q$, as well as monotonic for arbitrary $p$ and $q$. 
On the other hand, it is not clear from the definition if it is idempotent.
In fact, each $pq$-entailment operator is indeed idempotent but the proof is not trivial.
Idempotence is not used on the way to the collapsing theorem at the end of this section.
Moreover, it follows immediately from that theorem.
Therefore, we refrain from attempting at this point to prove the idempotence of each $pq$-entailment operator.
Observe also that it follows directly from its definition that each operator is compact.


The aim now is to compare the probabilistic entailments of $\svPL$ with the entailment of $\CPL$.  

To this end, we need to explain how a classical valuation canonically induces a stochastic valuation.
Given a valuation $v$, consider the family of maps
$$\fdd^v =\{\fdd^v_A: \wp A \to [0,1]\}_{A\in\pfwp B}$$
where each map is as follows:
$$\fdd^v_A(U) = \begin{cases}
       1 & U=v \cap A\\
       0 & \text{ otherwise}.
       \end{cases}$$

\begin{prop} \em \label{prop:inducedSV}
Given a valuation $v$, $\fdd^v$ is a consistent family of finite dimensional probability valuations.
\end{prop}
\begin{proof}\ \\
Since it is straightforward to check that each $\fdd^v_A$ is a probability measure over $A$,
we focus on showing that $\fdd^v$ fulfils the marginal condition. 
Let $A \in \pfwp B$, $A' \in \pwp A$ and $U' \in \wp A'$. Then, consider two cases:\\
(i)~$v \cap A' = U'$. Observe that $v \cap A\subseteq A$ and $v \cap A \cap A'=v\cap A'=U'$. Hence
$$
\displaystyle \sum_{\text{\shortstack[c]
{$U \subseteq A$\\
$U \cap A'=U'$
}
}} 
 \fdd^v_A(U) = 1 = \fdd_{A'}(U').
$$
(ii)~$v \cap A'\neq U'$. Then, $v \cap A \cap A' =v \cap A' \neq U'$ and so 
$$
\displaystyle \sum_{\text{\shortstack[c]
{$U \subseteq A$\\
$U \cap A'=U'$
}
}} 
 \fdd^v_A(U) = 0 = \fdd_{A'}(U').
$$
\end{proof}

Therefore, using Kolmogorov's existence theorem, there exists a unique stochastic valuation $V^v$ inducing the finite-dimensional probability valuations in $\fdd^v$.
We say that $V^v$ is the {\it stochastic valuation induced by} $v$. Observe that
$$\Prob(V^v_A=U)=\fdd^v_A(U)$$
for each $A \in \pfwp B$ and $U \subseteq A$.
The next result establishes the envisaged relationship between satisfaction by a valuation
and satisfaction by its induced stochastic valuation.

\begin{prop} \em \label{prop:satvssatp}
Given a formula $\alpha \in L$,  a valuation $v$ and $p \in\; (0,1]$
$$v \satc \alpha \quad \text{ iff } \quad V^v\sat_p\alpha.$$
\end{prop}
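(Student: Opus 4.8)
The plan is to compute $\Prob_{V^v}(\alpha)$ explicitly and observe that it can only take the values $0$ and $1$, according to whether $v$ classically satisfies $\alpha$; the equivalence then falls out immediately from the assumption $p>0$.

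First I would unfold the definition of the probability of $\alpha$ under the induced stochastic valuation and use the identity $\Prob(V^v_A=U)=\fdd^v_A(U)$ recorded just before the statement, obtaining
$$\Prob_{V^v}(\alpha)=\sum_{U \in \termvalue{\alpha}} \Prob(V^v_{B_\alpha}=U)=\sum_{U \in \termvalue{\alpha}} \fdd^v_{B_\alpha}(U).$$
Since $\fdd^v_{B_\alpha}(U)$ equals $1$ exactly when $U=v\cap B_\alpha$ and $0$ otherwise, this sum collapses to $1$ if $v\cap B_\alpha \in \termvalue{\alpha}$ and to $0$ if not. So everything reduces to the membership test $v\cap B_\alpha \in \termvalue{\alpha}$.

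The one subtle point — the key step — is the claim that $v\cap B_\alpha \in \termvalue{\alpha}$ holds iff $v \satc \alpha$. By definition $\termvalue{\alpha}=\{v'\cap B_\alpha: v'\satc\alpha\}$, so if $v\satc\alpha$ the membership is witnessed by $v'=v$. For the converse I would invoke the locality of classical satisfaction: whether a valuation satisfies $\alpha$ depends only on its restriction to the set $B_\alpha$ of symbols occurring in $\alpha$. Hence if $v\cap B_\alpha = v'\cap B_\alpha$ for some $v'\satc\alpha$, then $v\satc\alpha$ as well. This is the real content of the argument, though it is routine and can be discharged by a straightforward induction on the structure of $\alpha$ showing that $B_\alpha$ is its support.

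Finally I would assemble the two cases. If $v\satc\alpha$ then $\Prob_{V^v}(\alpha)=1\geq p$, so $V^v \satp{p} \alpha$; if $v\not\satc\alpha$ then $\Prob_{V^v}(\alpha)=0$, and because $p>0$ this value is strictly below $p$, so $V^v \not\satp{p}\alpha$. The requirement $p\in(0,1]$ — specifically $p>0$ — is exactly what separates the two cases, since for $p=0$ the right-hand side would hold vacuously and the stated equivalence would break down. I expect this observation about the indispensable role of $p>0$, rather than any calculation, to be the only thing genuinely worth flagging in the proof.
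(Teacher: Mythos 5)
Your proposal is correct and follows essentially the same route as the paper's proof: both reduce the question to whether $v \cap B_\alpha \in \termvalue{\alpha}$, using that $\fdd^v_{B_\alpha}$ is a point mass at $v \cap B_\alpha$, and both use $p>0$ to rule out the zero-probability case in the backward direction. If anything, you are slightly more explicit than the paper, which silently uses the locality fact (that satisfaction of $\alpha$ depends only on the restriction to $B_\alpha$) in the step from $v \cap B_\alpha \in \termvalue{\alpha}$ to $v \satc \alpha$.
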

\begin{proof}\ \\
($\to$)~Assume that $v \satc \alpha$. Then
$$v\cap B_\alpha \in \termvalue{\alpha}$$
by definition of $\termvalue{\alpha}$. Hence, 
$$\Prob_{V^v}(\alpha)=\sum_{U \in \termvalue{\alpha}} \Prob(V^v_{B_\alpha} =U) = \Prob(V^v_{B_\alpha} =v\cap B_\alpha)=1 \geq p,$$
by definition of $V^v$. So, $V^v\sat_p\alpha$.\\[2mm]
($\from$)~Assume that $V^v\sat_p\alpha$. Then, $\Prob_{V^v}(\alpha)\geq p$.
Hence,
$$\sum_{U \in \termvalue{\alpha}} \Prob(V^v_{B_\alpha}=U) \geq p>0.$$
Observe that $\Prob(V^v_{B_\alpha}=U)=0$ for every $U \neq v \cap B_\alpha$
and $\Prob(V^v_{B_\alpha} =v \cap B_\alpha)=1$.
Therefore, $v \cap B_\alpha \in \termvalue{\alpha}$.
Thus, $v \satc \alpha$.
\end{proof}

We now proceed with the investigation of the relationship between the probabilistic entailments and the classical entailment. To this end, we need the following auxiliary result.

\begin{prop} \em \label{prop:clasimpprob}
Given formulas $\delta$ and $\alpha$ with $\delta \entc \alpha$ and $p,q \in (0,1]$ with $p \geq q$,
$$\text{if }V \satp{p} \delta\text{ then }V \satp{q} \alpha$$
for every stochastic valuation $V$.
\end{prop}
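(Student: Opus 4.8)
The plan is to reduce this statement to the monotonicity result already established in Proposition~\ref{prop:monprobs}, so that the proof becomes a short chain of inequalities. First I would unwind the two relevant definitions: the hypothesis $V \satp{p} \delta$ means precisely that $\Prob_{V}(\delta) \geq p$, and the desired conclusion $V \satp{q} \alpha$ means precisely that $\Prob_{V}(\alpha) \geq q$. So the task is purely to deduce $\Prob_{V}(\alpha) \geq q$ from $\Prob_{V}(\delta) \geq p$, under the standing assumptions $\delta \entc \alpha$ and $p \geq q$.

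The key tool is Proposition~\ref{prop:monprobs}: since $\delta \entc \alpha$, it gives $\Prob_{V}(\delta) \leq \Prob_{V}(\alpha)$ for the given stochastic valuation $V$. Combining this with the hypothesis and the assumption $p \geq q$, I would write the single chain
$$\Prob_{V}(\alpha) \;\geq\; \Prob_{V}(\delta) \;\geq\; p \;\geq\; q,$$
where the first inequality is monotonicity, the second is the hypothesis $V \satp{p} \delta$, and the third is the assumption relating $p$ and $q$. From $\Prob_{V}(\alpha) \geq q$ the conclusion $V \satp{q} \alpha$ follows directly by definition of $p$-satisfaction.

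I do not expect any genuine obstacle here: the substantive content — that classical entailment $\delta \entc \alpha$ forces $\Prob_{V}(\delta) \leq \Prob_{V}(\alpha)$ — has already been isolated and proved in Proposition~\ref{prop:monprobs} via the marginal condition, so this statement is essentially a corollary. The only point worth flagging is that the hypothesis $p \geq q$ is exactly what is needed to close the chain; without it, $\Prob_{V}(\delta) \geq p$ would not in general yield $\Prob_{V}(\alpha) \geq q$, which also explains why this ordering assumption is carried throughout the definition of $\entpq{p}{q}$.
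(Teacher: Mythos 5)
Your proof is correct and follows exactly the paper's own argument: unwind the definition of $p$-satisfaction, invoke Proposition~\ref{prop:monprobs} for $\Prob_{V}(\delta) \leq \Prob_{V}(\alpha)$, and close with the chain $\Prob_{V}(\alpha) \geq \Prob_{V}(\delta) \geq p \geq q$. Nothing to add.
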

\begin{proof}\ \\
Let $V$ be a stochastic valuation such that $V \satp{p} \delta$. 
Hence, $$\Prob_{V}(\delta) \geq p.$$
So, by Proposition~\ref{prop:monprobs},
$$\Prob_{V}(\alpha) \geq \Prob_{V}(\delta) \geq p \geq q.$$
Thus, $V \satp{q} \alpha$.
\end{proof}

The next result shows that the probabilistic entailments collapse into the classical entailment.

\begin{pth} \label{th:collapse}\em
Given a set of formulas $\Delta$, a formula $\alpha$ and $p,q \in (0,1]$ with $p \geq q$,
$$\Delta \entc \alpha \quad \text{ iff } \quad \Delta \entpq{p}{q}\alpha.$$
\end{pth}
\begin{proof}\\
$(\to)$ Assume that $\Delta \entc \alpha$, and let $\Phi$ be a finite subset of $\Delta$ such that $\Phi \entc \alpha$. 
Hence, $\bigwedge \Phi \entc \alpha$.
Thus, by Proposition~\ref{prop:clasimpprob}, if $V \satp{p} \bigwedge \Phi$ then $V \satp{q} \alpha$, for every stochastic valuation $V$. Therefore, by definition, $\Delta \entpq{p}{q}\alpha$.\\[2mm]
$(\from)$ Assume that $\Delta \entpq{p}{q}\alpha$, and let $\Phi$ be a finite subset of $\Delta$ such that,
for every stochastic valuation $V$,
$$\text{if } V \satp{p} \bigwedge \Phi \text{ then } V \satp{q} \alpha.$$
Let $v$ be a valuation such that $v \satc \delta$ for each $\delta \in \Delta$. Then, $v\satc \bigwedge \Phi$. Observe that $p >0$. Then, $V^v \satp{p} \bigwedge \Phi$, by Proposition~\ref{prop:satvssatp}. So, $V^v \satp{q} \alpha$ because $\Delta \entpq{p}{q}\alpha$. Thus, again by Proposition~\ref{prop:satvssatp},
$v \satc \alpha$ since $q > 0$.
\end{proof}

In conclusion, since nothing is gained in terms of entailment by enriching the semantics of the classical proposicional logic with the means for assigning probabilities to formulas, it is necessary to extend the language with probabilistic constructs. That is precisely the objective of the rest of this paper.

\section{Probabilistic propositional logic}\label{sec:PPL}

The objective of this section is to define an enrichment of $\CPL$ that captures the probabilistic nature of the semantics provided by stochastic valuations. 
The idea is to add as little as possible to the propositional language $L$. 
It turns out that adding a symbolic construct allowing the constraining of the probability of a formula is enough.

Before proceeding with the presentation of the envisaged probabilistic propositional logic ($\PPL$), we need to adopt some notation concerning the first-order theory of real closed ordered fields ($\RCOF$),
having in mind the use of its terms for denoting probabilities and other quantities.

Recall that the first-order signature of $\RCOF$ contains the constants $\num0$ and $\num1$, the unary function symbol $-$, the binary function symbols $+$ and $\times$, and the binary predicate symbols $=$ and $<$. 
We take the set $X = X_\nats \cup X_L$,
where $X_\nats=\{x_k: k \in \nats\}$ and $X_L=\{x_\alpha: \alpha \in L\}$,
as the set of variables. 
In the sequel, by $T_\RCOF$ we mean the set of terms in $\RCOF$ that do not use variables in $X_L$.
As we shall see, the variables in $X_L$ become handy in the proposed axiomatization $\PPL$,
for representing within the language of $\RCOF$ the probability of $\alpha$.

As usual, we may write $t_1 \leq t_2$ for $(t_1 < t_2) \ldisj (t_1=t_2)$,
$t_1\,t_2$ for $t_1 \times t_2$ and $t^n$ for 
$$\underbrace{t \times \dots \times t}_{n \text{ times}}.$$
Furthermore, we also use the following abbreviations for any given $m\in\pnats$ and $n\in\nats$:
\begin{itemize}

\item $\num m$ for $\underbrace{\num1+\dots +\num1}_{\text{addition of } m \text{ units}}$;

\item $\inv{\num m}$ for the unique $z$ such that $\num m \times z = \num1$;

\item $\ds\frac{\num n}{\num m}$ for $\inv{\num m} \times \num n$.

\end{itemize}
The last two abbreviations might be extended to other terms, but we need them only for numerals. 
For the sake of simplicity, we do not notationally distinguish between a natural number and the corresponding numeral.

In order to avoid confusion with the other notions of satisfaction used herein, we adopt $\satfo$ for denoting satisfaction in first-order logic (over the language of $\RCOF$).

Recall also that the theory $\RCOF$ is decidable~\cite{tar:51}. This fact will be put to good use in the axiomatization for $\PPL$ (presented at the end of this section) and, further on (in Section~\ref{sec:cons}), when proving the decidability of $\PPL$.
Furthermore, every model of $\RCOF$ satisfies the theorems and only the theorems of $\RCOF$ (Corollary~3.3.16 in~\cite{mar:02}).
We shall take advantage of this result in the semantics of $\PPL$ for adopting the ordered field $\reals$ of the real numbers as the model of $\RCOF$.

\subsection{Language}

The language $L_\PPL$ of the propositional probability logic $\PPL$ is inductively defined as follows:

\begin{itemize}
\item $\inte{\alpha} \mycirc p \in L_\PPL$ where $\alpha \in L$,  $p \in T_\RCOF$ and $\mycirc \in \{=,<\}$;
\item $\varphi_1 \limp \varphi_2 \in L_\PPL$ whenever $\varphi_1,\varphi_2 \in L_\PPL$.
\end{itemize}

Propositional abbreviations can be introduced as usual. For instance,
$$\lneg \varphi \;\text{ for }\; \varphi \limp (\inte{\lverum} < 1)$$
and similarly for $\lconj$, $\ldisj$ and $\leqv$.
Comparison abbreviations also become handy. For instance,
$$\inte{\alpha} \leq p \;\text{ for }\; (\inte{\alpha} = p) \ldisj (\inte{\alpha} <  p)$$
and
$$\inte{\alpha} \geq p \;\text{ for }\; \lneg (\inte{\alpha} <  p).$$

\subsection{Semantics}

Given a term $t$ and an assignment $\rho: X \to \reals$, 
we write $t^{\reals\rho}$ for the denotation of term $t$ in $\reals$ for $\rho$. 
When $t$ does not contain variables we may use $t^\reals$ for the denotation of $t$ in $\reals$.

Let $V$ be a stochastic valuation and $\rho$ an assignment. 
{\it Satisfaction of formulas} by $V$ and $\rho$  is inductively defined as follows:
\begin{itemize}
\item $V \rho \sat \inte{\alpha} \mycirc p$ whenever $\Prob_V(\alpha) \mycirc p^{\reals\rho}$;
\item $V \rho \sat \varphi_1 \limp \varphi_2$ whenever $V\rho \not \sat \varphi_1$ or $V \rho \sat  \varphi_2$.
\end{itemize}
We may omit the reference to the assignment $\rho$ whenever the formula does not include variables.

Let $\Gamma \subseteq L_\PPL$ and $\varphi \in L_\PPL$. We say that $\Gamma$ {\it entails} $\varphi$, written $\Gamma \ent \varphi$,
whenever, for every stochastic valuation $V$ and assignment $\rho$,
if $V \rho \sat \gamma$ for each $\gamma \in \Gamma$ then $V \rho \sat \varphi$.
As expected, $\varphi$ is said to be {\it valid} when $\ent \varphi$.

Observe that entailment in $\PPL$ is not compact. Indeed, since $\reals$ is Archimedean,
$$\left\{\inte{\alpha} \leq \frac 1n : n \in \nats\right\} \ent \inte{\alpha}=0.$$
However, there is no finite subset $\Psi$ of $\{\inte{\alpha} \leq \frac 1n : n \in \nats\}$ such that
$$\Psi \ent \inte{\alpha}=0.$$

\subsection{Calculus}

The $\PPL$ calculus combines propositional reasoning with $\RCOF$ reasoning. 
We intend to use the $\RCOF$ reasoning to a minimum, namely to prove assertions like  
$$\left(\inte{\alpha_1} \mycirc_1 p_1 \lconj \cdots \lconj   \inte{\alpha_k} \mycirc_k p_k\right) \limp \inte{\alpha_{k+1}} \mycirc_{k+1} p_{k+1}.$$

To this end, we represent in $\RCOF$ the probability $\inte{\alpha}$ of each propositional formula $\alpha$ by variable $x_\alpha$ and impose conditions on that variable that effect the properties of the probability.

Recall that the probability of a formula $\alpha$ is the sum of the probabilities of the $B_\alpha$-valuations that satisfy the formula and that there is a disjunctive normal form of $\alpha$ where each disjunct can be seen as identifying a $B_\alpha$-valuation that satisfies the formula. Hence, for calculating the probability of $\alpha$ it is enough to sum the probabilities of each such disjunct. 

As we proceed to explain, 
we collect these conditions in a formula of $\RCOF$.
We say that 
$\Lambda = \{\alpha_{11}, \ldots, \alpha_{1m_1}, \ldots, \alpha_{k1}, \ldots, \alpha_{km_{k}}\} \subset L$ 
is an {\it adequate set of DNF-conjuncts}
for $\{\alpha_1,\ldots,\alpha_{k}\} \subset L$ whenever 
\begin{enumerate}
\item  $\ds B_{\alpha_{11}}=\dots = B_{\alpha_{km_{k}}} = 
			B_{\alpha_1} \cup \dots \cup B_{\alpha_{k}}= B_\Lambda$; 
\item each $\alpha_{j\ell}$ is a conjunction of literals;
\item $\entc \lneg(\alpha_{j\ell} \lconj \alpha_{j\ell'})$ for $1\leq \ell \neq \ell' \leq m_j$;
\item $\entc \alpha_j \leqv \bigvee_{\ell=1}^{m_j} \alpha_{j\ell}$ 
for each $j=1,\ldots,k$.
\end{enumerate}
Observe that clauses 2 and 4 ensure that $\bigvee_{\ell=1}^{m_j} \alpha_{j\ell}$ is a disjunctive normal form of $\alpha_j$. 
Moreover, clause 3 guarantees that there are not redundant disjuncts in the disjunctive normal form of each $\alpha_j$. 
Given such set $\Lambda$ of adequate DNF-conjuncts for $\alpha_1,\ldots,\alpha_{k}$,
we use the abbreviation $$Q^{\alpha_1,\ldots,\alpha_{k}}_{\alpha_{11},\ldots,\alpha_{km_{k}}}$$
for the $\RCOF$ formula
$$\left(\bigwedge_{U\subseteq B_\Lambda} 0 \leq x_{\phi_{B_\Lambda}^U} \leq 1\right)\lconj
	\left(\sum_{U\subseteq B_\Lambda} x_{\phi_{B_\Lambda}^U}=1\right)\lconj
	\left(\bigwedge_{j=1}^{k}\left(x_{\alpha_j}=\sum_{\ell=1}^{m_j} x_{\alpha_{j\ell}}\right)\right).$$ 
Recall that each propositional formula $\phi_{B_\Lambda}^U$ identifies the $B_\Lambda$-valuation that assigns true to the propositional symbols in $U$ and assigns false to the propositional symbols in $B_\Lambda\setminus U$. Hence, 
$$0 \leq x_{\phi_{B_\Lambda}^U} \leq 1$$
imposes that the probability of each $B_\Lambda$-valuation should be in the interval $[0,1]$ and 
$$\sum_{U\subseteq B_\Lambda} x_{\phi_{B_\Lambda}^U}=1$$
imposes in $\RCOF$ that the sum of the probabilities of all $B_\Lambda$-valuations is $1$.
The conjunct
$$x_{\alpha_j}=\sum_{\ell=1}^{m_j} x_{\alpha_{j\ell}}$$
imposes that the probability of $\alpha_j$ is the sum of the probabilities of the valuations that satisfy the formula.

The calculus for $\PPL$ is an extension of the classical propositional calculus containing the following axioms and rules:

\begin{itemize}

\item[$\TAUT$] $\ds\lrule{}{\varphi}$\\[2mm]
provided that
$\varphi$ is a tautological formula;

\item[$\RR$] $\ds\lrule{}
{\left(\inte{\alpha_1} \mycirc_1 p_1 \lconj \cdots \lconj   \inte{\alpha_k} \mycirc_k p_k\right) \limp
									 \inte{\alpha_{k+1}} \mycirc_{k+1} p_{k+1}}$ \\[3mm]
provided that 
there is an adequate set 
$$\{\alpha_{11}, \ldots, \alpha_{1m_1}, \ldots, \alpha_{(k+1) 1}, \ldots, \alpha_{(k+1)m_{k+1}}\}$$
of DNF-conjuncts for $\{\alpha_1,\dots,\alpha_{k+1}\}$ such that \\[4mm]
\hspace*\fill $\ds\bigforall
		\left(\left(Q^{\alpha_1,\ldots,\alpha_{k+1}}_{\alpha_{11},\ldots,\alpha_{k+1m_{k+1}}} \lconj \bigwedge_{j=1}^k \left(x_{\alpha_{j}} \mycirc_j p_j\right)\right)
					\limp \left(x_{\alpha_{k+1}} \mycirc_{k+1} p_{k+1}\right)\right)$\\[2mm]
is a theorem of $\RCOF$;

\item[$\MP$] $\ds\lrule{\begin{array}{l}
							\varphi_1 \quad
							\varphi_1 \limp\varphi_2
					\end{array}}
					{\varphi_2}$.
\end{itemize}

Axioms $\TAUT$ and rule $\MP$ extend the propositional reasoning to formulas in $L_\PPL$. Axioms $\RR$ import to $\PPL$ all we need from $\RCOF$.

\subsection{Examples}\label{subsec:pplexamples}

Consider the derivation in Figure~\ref{fig:probleq1} for establishing that
$$\der \inte{\alpha} \leq 1$$
holds for arbitrary $\alpha \in L$. The use of RR axioms can be involved. We explain their use only for
obtaining $\inte{\lverum}=1$ (step 2 of the derivation). Assume that $\lverum$ is an abbreviation of $B_1 \ldisj (\lneg B_1)$. Then
the following formula is in $\RCOF$:
$$\ds\bigforall
		\left(Q^{\lverum}_{B_1,\lneg B_1}
					\limp \left(x_{\lverum} = 1\right)\right)$$ 
where $Q^{\lverum}_{B_1,\lneg B_1}$ is 
$$\begin{array}{c}
(0 \leq x_{B_1} \leq 1) \lconj  (0 \leq x_{\lneg B_1} \leq 1)\\ \lconj\\
  x_{B_1} + x_{\lneg B_1}=1\\ 
 \lconj \\
 x_{\lverum} = x_{B_1} + x_{\lneg B_1},
\end{array}$$
and, so, $\inte{\lverum} = 1$ is obtained by $\RR$.

\begin{figure}[t]
$$\begin{array}{rlr}
	1 & \inte{(\alpha \limp \lverum)}=1 &\RR\\[2mm]			              
	2 & \inte{\lverum} =1	& \RR\\[2mm]
	3 &  (\inte{(\alpha \limp \lverum)}=1) \limp ( (\inte{\lverum} =1) \limp ( (\inte{(\alpha \limp \lverum)}=1) \lconj (\inte{\lverum} =1))) &  \TAUT\\[2mm]
	4 &  (\inte{\lverum} =1) \limp ( (\inte{(\alpha \limp \lverum)}=1) \lconj (\inte{\lverum} =1)) & \MP \; 1,3	\\[2mm]
	5 &  (\inte{(\alpha \limp \lverum)}=1) \lconj (\inte{\lverum} =1) & \MP \; 2,4	\\[2mm]
	6 & ((\inte{(\alpha \limp \lverum)}=1) \lconj (\inte{\lverum} =1)) \limp  (\inte{\alpha} \leq 1)	& \RR\\[1mm]
	7 & \inte{\alpha} \leq 1 & \MP\;  5,6
\end{array}$$\vspace*{-6mm}
\caption{$\dummy\der \inte{\alpha} \leq 1$.}\label{fig:probleq1}
\end{figure}

Next, let us see how we can express and derive (binary) additivity, i.e., how we can 
establish that
$$\inte{\alpha}=x_1,\inte{\beta}=x_2,\inte{(\alpha \lconj \beta)}=x_3
\der {\inte{(\alpha \ldisj \beta}) = x_1 + x_2 - x_3}$$
holds for arbitrary $\alpha,\beta \in L$. This follows immediately using $\RR$ since, letting $\{\alpha_{11},\ldots,\alpha_{4m_4}\}$ be an adequate set of DNF-conjuncts for $\{\alpha_1,\ldots,\alpha_4\}$ where
\begin{itemize}
\item $\alpha_1$ is $\alpha$;
\item $\alpha_2$ is $\beta$;
\item $\alpha_3$ is  $\alpha \lconj \beta$ and each $\alpha_{3\ell}$ is a common disjunct in the disjunctive normal form of $\alpha_1$ and $\alpha_2$;
\item $\alpha_4$ is $\alpha \ldisj \beta$ and each $\alpha_{4\ell}$ is a disjunct in the disjunctive normal form of $\alpha_1$ and $\alpha_2$ with no repetitions;
\end{itemize}
the following formula
$$\ds\bigforall
		\left(\left(Q^{\alpha_1,\ldots,\alpha_{4}}_{\alpha_{11},\ldots,\alpha_{4m_{4}}} \lconj \bigwedge_{j=1}^3 \left(x_{\alpha_{j}} = x_j\right)\right)
					\limp \left(x_{\alpha_{4}} = x_1+x_2-x_3\right)\right)$$ 
is a theorem of $\RCOF$.

The marginal condition is also expressible and derivable. For instance, let $A=\{B_1,B_2\}$, $A'=\{B_1\}$ and $U'=\{B_1\}$.
We present in Figure~\ref{fig:marg} a derivation for showing that
$$\inte{(B_1 \lconj  (\lneg B_2))} = x_{1},\inte{(B_1 \lconj B_2)} = x_{2}  \der \inte{B_1} = x_1+x_2$$
holds.
\begin{figure}[t]
$$\begin{array}{rlr}
	1 & \inte{(B_1 \lconj  (\lneg B_2))} = x_{1} &\HYP\\[2mm]			              
	2 & \inte{(B_1 \lconj B_2)} = x_{2}	& \HYP\\[2mm]
	3 &  (\inte{(B_1 \lconj  (\lneg B_2))} = x_{1}) \limp ( (\inte{(B_1 \lconj B_2)} = x_{2}) \limp \dummy\\[1mm]
	     & \hspace*{7mm} ( (\inte{(B_1 \lconj  (\lneg B_2))} = x_{1}) \lconj (\inte{(B_1 \lconj B_2)} = x_{2}))) &  \TAUT\\[2mm]
	4 &  (\inte{(B_1 \lconj B_2)} = x_{2}) \limp \dummy\\[1mm]
	     & \hspace*{7mm} ( (\inte{(B_1 \lconj  (\lneg B_2))} = x_{1}) \lconj (\inte{(B_1 \lconj B_2)} = x_{2})) & \MP \; 1,3	\\[2mm]	
	5 &  (\inte{(B_1 \lconj  (\lneg B_2))} = x_{1}) \lconj (\inte{(B_1 \lconj B_2)} = x_{2}) & \MP \; 2,4	\\[2mm]	
	6 &  ((\inte{(B_1 \lconj  (\lneg B_2))} = x_{1}) \lconj (\inte{(B_1 \lconj B_2)} = x_{2})) \limp \dummy\\[1mm]
	& \hspace*{55mm}(\inte{B_1} = x_1+x_2) & \RR	\\[2mm]	
	7 &  \inte{B_1} = x_1+x_2 & \MP \; 5,6	\\[2mm]
\end{array}$$\vspace*{-6mm}
\caption{$\dummy \inte{(B_1 \lconj  (\lneg B_2))} = x_{1},\inte{(B_1 \lconj B_2)} = x_{2}  \der \inte{B_1} = x_1+x_2$.}\label{fig:marg}
\end{figure}

The next example shows how {\it modus ponens} for classical formulas is lifted to $\PPL$ formulas.
Observe that
$$\inte{\alpha_1} = 1, \inte{\alpha_1 \limp \alpha_2} = 1 \der \inte{\alpha_2} = 1$$
holds, as can be seen in Figure~\ref{fig:derMPPPL}.
\begin{figure}[t]
$$\begin{array}{rlr}
	1 & \inte{(\alpha_1)}=1 &\HYP\\[2mm]			              
	2 & \inte{(\alpha_1 \limp \alpha_2)}=1 & \HYP\\[2mm]
	3 &  (\inte{(\alpha_1)}=1) \limp ( (\inte{(\alpha_1 \limp \alpha_2)}=1) \limp \\[1mm]
	     & \hspace*{15mm}( (\inte{(\alpha_1)}=1) \lconj (\inte{(\alpha_1 \limp \alpha_2)}=1))) &  \TAUT\\[2mm]
	4 &  (\inte{(\alpha_1 \limp \alpha_2)}=1) \limp 
	     ( (\inte{(\alpha_1)}=1) \lconj (\inte{(\alpha_1 \limp \alpha_2)}=1)) & \MP \; 1,3	\\[2mm]
	5 &  (\inte{(\alpha_1)}=1) \lconj (\inte{(\alpha_1 \limp \alpha_2)}=1) & \MP \; 2,4	\\[2mm]
	6 & ((\inte{(\alpha_1)}=1) \lconj (\inte{(\alpha_1 \limp \alpha_2)}=1)) \limp  (\inte{\alpha_2} = 1)	& \RR\\[1mm]
	7 & \inte{\alpha_2} = 1 & \MP\;  5,6
\end{array}$$\vspace*{-6mm}
\caption{$\inte{\alpha_1} = 1, \inte{\alpha_1 \limp \alpha_2} = 1 \der \inte{\alpha_2} = 1$.}\label{fig:derMPPPL}
\end{figure}
Therefore, the rule 
$$\MP^* \quad \ds\lrule{\begin{array}{l}
							\inte{\alpha_1} = 1 \quad
							\inte{\alpha_1 \limp \alpha_2} = 1
					\end{array}}
					{\inte{\alpha_2} = 1}$$
is admissible in the $\PPL$ calculus.

In the same vein it is easy to show that the rule
$$\TAUT^* \quad \ds\lrule{  }{\inte{\alpha} = 1} \quad \text{provided that} \entc\alpha $$
is also admissible in the $\PPL$ calculus. Just observe that 
$$\{\phi^U_{B_\alpha}: U \subseteq B_\alpha\}$$
is an adequate set of $\DNF$-conjuncts for $\alpha$, 
since $\alpha$ is a tautology. Then, it is immediate to conclude that
$(\inte{\lverum}=1) \limp (\inte{\alpha} = 1)$ is an $\RR$ axiom and, so, $\inte{\alpha} = 1$ is derived from $\MP$.

As a further illustration of the expressive power of $\PPL$, we want to specify what is envisaged with an oblivious transfer protocol (otp) by expressing the assumed state before and the required state after a run of the protocol. 
To this end, it becomes handy to use the abbreviation
$$\alpha \;\;\text{for}\;\:  \inte{\alpha} =1$$
to which we will return in Section~\ref{sec:cons} for establishing a conservative embedding of $\CPL$ in $\PPL$.

Simplifying from~\cite{rab:81}, an otp is a protocol to be followed by two agents (say John and Mary) so that John sends a bit to Mary, but remains oblivious as to if the bit reached Mary or not, while these two alternatives are equiprobable. Rabin proposed a protocol for solving a more general problem (a message with several bits is to be obliviously sent by John to Mary). The existence of such oblivious transfer protocols is quite significant because by building upon them one can solve other types of cryptographic problems.

The state that is assumed before the run of the protocol and the state required after the run can be specified using the following propositional symbols (each with the indicated intended meaning):
\begin{align*}
\mathsf{JB0} 		&\phantom{{}={}} (\text{John holds bit 0})\displaybreak[1]\\
\mathsf{JB1} 		&\phantom{{}={}} (\text{John holds bit 1})\displaybreak[1]\\
\mathsf{MB0} 		&\phantom{{}={}} (\text{Mary holds bit 0})\displaybreak[1]\\
\mathsf{MB1} 		&\phantom{{}={}} (\text{Mary holds bit 1})\displaybreak[1]\\
\mathsf{JKMB0} 	&\phantom{{}={}} (\text{John knows that Mary holds bit 0})\displaybreak[1]\\
\mathsf{JKMB1} 	&\phantom{{}={}} (\text{John knows that Mary holds bit 1}).
\end{align*}
Indeed, the assumed initial state can be specified by the conjunction of the following $\PPL$ formulas:
\begin{align*}
\mathsf{JB0} \ldisj \mathsf{JB1} 			&\phantom{{}={}} (\text{John holds bit 1 or holds bit 0})\\
(\lneg\mathsf{MB0}) \lconj (\lneg\mathsf{MB1}) 	&\phantom{{}={}} (\text{Mary does not hold bit 1 or bit 0}),
\end{align*}
and the envisaged final state by the conjunction of the following $\PPL$ formulas:
\begin{align*}
\mathsf{JB0} \ldisj \mathsf{JB1} 
				&\phantom{{}={}} (\text{John holds bit 1 or holds bit 0})\displaybreak[1]\\
(\lneg\mathsf{JKMB0}) \lconj (\lneg\mathsf{JKMB1}) 
				&\phantom{{}={}} (\text{John does not know}\\
				&\phantom{{}={}} \text{ what, if anything, Mary holds})\displaybreak[1]\\
\mathsf{MB0} \limp \mathsf{JB0} 
				&\phantom{{}={}} (\text{If Mary holds bit 0 then so does John})\displaybreak[1]\\
\mathsf{MB1} \limp \mathsf{JB1} 
				&\phantom{{}={}} (\text{If Mary holds bit 1 then so does John})\displaybreak[1]\\
\inte{(\mathsf{MB0} \ldisj \mathsf{MB1})}=\frac12 
				&\phantom{{}={}} (\text{Mary holds a bit with probability } \frac12).
\end{align*}
It is also necessary to impose the relevant epistemic requirements:
$$\begin{array}{l}
\mathsf{JKMB0} \limp \mathsf{MB0}\\
\mathsf{JKMB1} \limp \mathsf{MB1}.
\end{array}$$
This example shows the practical interest of developing a probabilistic epistemic dynamic logic as an enrichment of $\PPL$, endeavour that we leave for future work.
 
Observe that in the previous example we only needed a finite number of propositional symbols.
But a key novelty of $\PPL$ is the possibility of working with a denumerable set of propositional symbols.
This capability of $\PPL$ adds a lot to its expressive power.

As an illustration, consider the encoding in $\PPL$ of the halting problem 
(as originally introduced in~\cite{tur:37}): 
$$\text{Does Turing machine $i$ halts on input $j$?}$$
In this case we need the following propositional symbols (with the indicated intended meaning):
$$\begin{array}{lll}
\mathsf{H}_{ijk} & (\text{Machine $i$ halts on input $j$ in $k$ steps}) & \text{for each } i,j,k\in\nats\\
\mathsf{H}_{ij} & (\text{Machine $i$ halts on input $j$}) & \text{for each } i,j\in\nats.
\end{array}$$
Using this denumerable set of propositional symbols, consider the $\PPL$ theory with the following set of proper axioms:
$$\begin{array}{lcl}
\text{Ax}_\mathsf{H} &=&
	\{ \mathsf{H}_{ijk} \limp \mathsf{H}_{ij}: i,j,k\in\nats \} \\[1mm]
	&&\qquad\qquad {\cup} \\[1mm]
	&&\{ \mathsf{H}_{ijk}: \text{machine $i$ halts on input $j$ in $k$ steps} \}.
\end{array}$$
It is straightforward to establish the following fact about this theory for each $i,j\in\nats$:
$$\text{(i) $\text{Ax}_\mathsf{H} \der \mathsf{H}_{ij}$ \;iff\; 
	(ii) machine $i$ halts on input $j$.}$$
Indeed, it is easy to present a derivation for obtaining (i) from (ii).
On the other hand, obtaining (ii) from (i) requires the (strong) soundness of the calculus of $\PPL$, the first result in Section~\ref{sec:soundcomp}.
Observe that $\text{Ax}_\mathsf{H}$ is decidable as required of a set of axioms. 
However, $(\text{Ax}_\mathsf{H})^\der$ is undecidable (since otherwise, thanks to the fact above, the halting problem would also be decidable).
So, this example shows that, in $\PPL$, $\Gamma^\der$ may be undecidable even when $\Gamma$ is assumed to be decidable. But $\emptyset^\der$ is decidable, the last result of Section~\ref{sec:cons}.

The probabilistic capabilities of $\PPL$ would be needed for developing a similar theory for probabilistic Turing machines. To this end, we may take
$$\begin{array}{c} 
	\left\{ \inte{\mathsf{H}_{ijk}} = x_1 \limp \inte{\mathsf{H}_{ij}} \geq x_1: i,j,k\in\nats \right\} \\[1mm]
	{\cup} \\[1mm]
	\left\{ \inte{\mathsf{H}_{ijk}} = p: 
			\text{machine $i$ halts on input $j$ in $k$ steps with probability $p^\reals$} \right\}
\end{array}$$
as the set of proper axioms. 
 
Notwithstanding their simplicity, the examples above should be enough to assess the power of $\PPL$ for describing probabilistic systems and reasoning about them.


\section{Soundness and weak completeness}\label{sec:soundcomp}

In this section we show that the calculus for $\PPL$ is (strongly) sound and weakly complete. Observe that strong completeness is obviously out of question since the $\PPL$ entailment is not compact (as mentioned in Section~\ref{sec:PPL}).
\begin{pth} \em\label{th:sound}
The logic $\PPL$ is sound. \end{pth}
\begin{proof}
The rules are sound. We only check that axiom $\RR$ is sound since the proof of the others is straightforward.
\\
$(\RR)$ is sound. Let $V$ be a stochastic valuation and $\rho$ an assignment over $\reals$. Assume that
$$V \rho \sat \inte{\alpha_j} \mycirc_j p_j \text{ for each } j=1,\dots, k$$
and that the formula
$$\ds\bigforall
		\left(\left(Q^{\alpha_1,\ldots,\alpha_{k+1}}_{\alpha_{11},\ldots,\alpha_{k+1m_{k+1}}} \lconj \bigwedge_{j=1}^k \left(x_{\alpha_{j}} \mycirc_j p_j\right)\right)
					\limp \left(x_{\alpha_{k+1}} \mycirc_{k+1} p_{k+1}\right)\right)$$
					is in $\RCOF$. 
Let $\rho'$ be an assignment over $\reals$ such that,
$$\rho'(x_\alpha)=\Prob_V(\alpha)$$
and $\rho'(x)=\rho(x)$ for every $x \in X_\nats$.
Then,
$$\reals \rho' \satfo Q^{\alpha_1,\ldots,\alpha_{k+1}}_{\alpha_{11},\ldots,\alpha_{k+1m_{k+1}}} \lconj \bigwedge_{j=1}^k \left(x_{\alpha_{j}} \mycirc_j p_j\right).$$
Therefore, 
$$\reals \rho' \satfo x_{\alpha_{k+1}} \mycirc_{k+1} p_{k+1}$$
Hence, $\rho'(x_{\alpha_{k+1}}) \mycirc_{k+1} p_{k+1}^{\reals\rho'}$ and so
$\Prob_V(\alpha_{k+1}) \mycirc_{k+1} p_{k+1}^{\reals\rho}$. Therefore
$V \rho \sat \inte{\alpha_{k+1}} \mycirc_{k+1} p_{k+1}$.
\end{proof}

We now proceed towards the weak completeness of the calculus. We start by proving an important lemma showing that we can move back and forth between 
satisfaction of $\RCOF$ formulas expressing probabilistic reasoning 
and 
satisfaction of $\PPL$ formulas. 

\begin{prop} \em \label{prop:orcftostoval}
Let $\varphi$ be a formula of $\PPL$ and $\alpha_1,\dots,\alpha_k$ be the propositional formulas such that $\inte{\alpha_j} \mycirc_j p_j$
occurs in $\varphi$ for each $j=1,\dots,k$. 
Moreover, let $\Lambda=\{\alpha_{11},\ldots,\alpha_{km_k}\}$ be an adequate set of DNF-conjuncts for $\{\alpha_1,\dots,\alpha_k\}$. Let $\rho$ be an assignment over $\reals$. Assume that
$$\reals \rho \satfo Q^{\alpha_1,\ldots,\alpha_{k}}_{\alpha_{11},\ldots,\alpha_{km_{k}}}.$$
Then, there is a stochastic valuation $V$ such that 
$$V \rho \sat \varphi \quad  \text{iff}\quad  \reals \rho \satfo \psi$$
where $\psi$ is the $\RCOF$ formula obtained from $\varphi$ 
by substituting $x_\alpha \mycirc p$ for each $\PPL$ formula $\inte{\alpha} \mycirc p$.
\end{prop}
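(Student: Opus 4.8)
The plan is to read off a probability distribution on the $B_\Lambda$-valuations from the restriction of $\rho$ to the variables $x_{\phi_{B_\Lambda}^U}$, to build a stochastic valuation $V$ realizing it via Kolmogorov's existence theorem, and then to reduce the desired biconditional to the single numerical identity $\Prob_V(\alpha_j)=\rho(x_{\alpha_j})$ for each $j$, finishing by structural induction on $\varphi$.

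First I would observe that the hypothesis $\reals\rho\satfo Q^{\alpha_1,\ldots,\alpha_{k}}_{\alpha_{11},\ldots,\alpha_{km_{k}}}$ forces, through its first two conjuncts, that the map $d\colon U\mapsto \rho(x_{\phi_{B_\Lambda}^U})$ satisfies $0\le d(U)\le 1$ for all $U\subseteq B_\Lambda$ and $\sum_{U\subseteq B_\Lambda} d(U)=1$; hence $d$ is a finite-dimensional probabilistic valuation on $B_\Lambda$. I would extend $d$ to a consistent family $\fdd=\{\fdd_A\}_{A\in\pfwp B}$ over the whole index set $B$ --- for instance by declaring every propositional symbol outside $B_\Lambda$ to be independent of the rest and almost surely false --- and check the marginal condition directly (exactly as in Proposition~\ref{prop:inducedSV}). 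Kolmogorov's existence theorem then yields a stochastic valuation $V$ with $\Prob(V_{B_\Lambda}=U)=d(U)$ for all $U\subseteq B_\Lambda$; its behaviour off $B_\Lambda$ is irrelevant, since every $\alpha_j$ has $B_{\alpha_j}\subseteq B_\Lambda$.

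The core step is the identity $\Prob_V(\alpha_j)=\rho(x_{\alpha_j})$. Choosing a formula $\phi$ with $B_\phi=B_\Lambda$ (say $\phi_{B_\Lambda}^\emptyset$), Proposition~\ref{prop:mardisj} rewrites $\Prob_V(\alpha_j)$ as $\sum_{U\in\{v\cap B_\Lambda:\, v\satc\alpha_j\}}\Prob(V_{B_\Lambda}=U)$. By adequacy clause~4 each $\alpha_j$ is classically equivalent to $\bigvee_{\ell}\alpha_{j\ell}$ and, the conjuncts $\alpha_{j\ell}$ being the minterms $\phi_{B_\Lambda}^{U_{j\ell}}$, the $B_\Lambda$-valuations satisfying $\alpha_j$ are exactly $U_{j1},\ldots,U_{jm_j}$; these are pairwise distinct by clause~3 together with Proposition~\ref{prop:contUdif}. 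Thus $\Prob_V(\alpha_j)=\sum_{\ell=1}^{m_j}\Prob(V_{B_\Lambda}=U_{j\ell})=\sum_{\ell=1}^{m_j}\rho(x_{\phi_{B_\Lambda}^{U_{j\ell}}})=\sum_{\ell=1}^{m_j}\rho(x_{\alpha_{j\ell}})$, which equals $\rho(x_{\alpha_j})$ by the third conjunct of $Q^{\alpha_1,\ldots,\alpha_{k}}_{\alpha_{11},\ldots,\alpha_{km_{k}}}$, satisfied by $\reals\rho$.

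With this identity in hand I would close by induction on the structure of $\varphi$. In the base case $\varphi=\inte{\alpha_j}\mycirc_j p_j$ and $\psi=x_{\alpha_j}\mycirc_j p_j$: here $V\rho\sat\varphi$ means $\Prob_V(\alpha_j)\mycirc_j p_j^{\reals\rho}$, which by the identity is $\rho(x_{\alpha_j})\mycirc_j p_j^{\reals\rho}$, i.e.\ $\reals\rho\satfo\psi$. Since both $\PPL$ and $\RCOF$ interpret $\limp$ classically and the substitution $\inte{\alpha}\mycirc p\mapsto x_\alpha\mycirc p$ commutes with $\limp$, the implication case follows at once from the induction hypothesis. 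I expect the main obstacle to be the bookkeeping in the core step: correctly matching the set of satisfying $B_\Lambda$-valuations with the DNF conjuncts $\{U_{j\ell}\}$ and identifying the variables $x_{\alpha_{j\ell}}$ with $x_{\phi_{B_\Lambda}^{U_{j\ell}}}$, so that the probabilistic sum lines up with the algebraic constraint imposed by $Q^{\alpha_1,\ldots,\alpha_{k}}_{\alpha_{11},\ldots,\alpha_{km_{k}}}$.
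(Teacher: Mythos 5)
Your proposal is correct and follows essentially the same route as the paper's proof: read a probability distribution on $\wp B_\Lambda$ off the variables $x_{\phi_{B_\Lambda}^U}$ (using the first two conjuncts of $Q^{\alpha_1,\ldots,\alpha_{k}}_{\alpha_{11},\ldots,\alpha_{km_{k}}}$), extend it to a consistent family of finite-dimensional distributions, invoke Kolmogorov's existence theorem, establish the core identity $\Prob_V(\alpha_j)=\rho(x_{\alpha_j})$ via Proposition~\ref{prop:mardisj}, the adequacy clauses and the third conjunct of $Q$, and finish by structural induction on $\varphi$. The only divergence is your extension off $B_\Lambda$ (outside symbols independent and almost surely false, versus the paper's uniform fair-coin weighting $1/2^{|A\setminus B_\Lambda|}$); both are legitimate, and yours in fact shortens the marginal-condition verification that occupies most of the paper's proof via an explicit bijection argument.
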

\begin{proof}
Let $A \in \fwp B$ and $\fdd_A: \wp A \to [0,1]$ be such that
$$\fdd_A(U)=\frac{1} {2^{|A \setminus B_\Lambda|}} \sum_{\text{\shortstack[c]
{$U' \subseteq  B_\Lambda$\\
$U' \cap A=U \cap B_\Lambda$
}
}} \rho(x_{\phi^{U'}_{B_\Lambda}}).$$
Note that $0\leq  \rho(x_{\phi^{U'}_{B_\Lambda}})\leq 1$ since $\reals \rho \satfo Q^{\alpha_1,\ldots,\alpha_{k}}_{\alpha_{11},\ldots,\alpha_{km_{k}}}$. 
We start by showing that $\fdd_A$ is a finite-dimensional probability distribution. Observe that \\[2mm]
\begin{align*}
\displaystyle \sum_{U \subseteq A} \fdd_A(U)  
            =& \; \displaystyle \sum_{U \subseteq A} \frac{1} {2^{|A \setminus B_\Lambda|}} \sum_{\text{\shortstack[c]
{$U' \subseteq  B_\Lambda$\\
$U' \cap A=U \cap B_\Lambda$
}
}} \rho(x_{\phi^{U'}_{B_\Lambda}})\displaybreak[1] \\[2mm]
          =&\; \displaystyle \frac{1} {2^{|A \setminus B_\Lambda|}} \sum_{U \subseteq A} \sum_{\text{\shortstack[c]
{$U' \subseteq  B_\Lambda$\\
$U' \cap A=U \cap B_\Lambda$
}
}} \rho(x_{\phi^{U'}_{B_\Lambda}})\displaybreak[1] \\[2mm]
=& \; 1
\end{align*}
since
\begin{align*}
\displaystyle \sum_{U \subseteq A} \sum_{\text{\shortstack[c]
{$U' \subseteq  B_\Lambda$\\
$U' \cap A=U \cap B_\Lambda$
}
}} \rho(x_{\phi^{U'}_{B_\Lambda}})
 = & \;
\displaystyle  \sum_{U_1 \subseteq A \setminus B_\Lambda} \sum_{U_2 \subseteq A \cap B_\Lambda} \sum_{\text{\shortstack[c]
{$U' \subseteq  B_\Lambda$\\
$U' \cap A=U_2$
}
}} \rho(x_{\phi^{U'}_{B_\Lambda}}) \displaybreak[1] \\[2mm]
= & \;
\displaystyle  \sum_{U_1 \subseteq A \setminus B_\Lambda} 
\sum_{U' \subseteq B_\Lambda} \rho(x_{\phi^{U'}_{B_\Lambda}}) & (*)\displaybreak[1] \\[2mm]
= & \; 
\displaystyle  \sum_{U_1 \subseteq A \setminus B_\Lambda} 1 & (**) 
\displaybreak[1] \\[2mm]
= & \; 2^{|A \setminus B_\Lambda|}
\end{align*}
where $(*)$ follows from the fact that there is a bijection from
$$\{(U',U_2): U' \subseteq B_\Lambda, U' \cap A = U_2, U_2 \subseteq A \cap B_\Lambda\} \; \text{ to } \; \{U': U'\subseteq B_\Lambda\}$$ and $(**)$ holds 
because $\reals\rho \satfo Q^{\alpha_1,\ldots,\alpha_{k}}_{\alpha_{11},\ldots,\alpha_{km_{k}}}$. \\[2mm]
Now we prove that $\{\fdd_A\}_{A \in \fwp B}$ satisfies the marginal condition. Let $A \subseteq A'$, $A' \subseteq B$ and $U \subseteq A$. Then, \\[2mm]
\begin{align*}
 \fdd_A(U) 
 = & \; \displaystyle \frac{1} {2^{|A \setminus B_\Lambda|}} \sum_{\text{\shortstack[c]
{$U' \subseteq  B_\Lambda$\\
$U' \cap A=U \cap B_\Lambda$
}
}} \rho(x_{\phi^{U'}_{B_\Lambda}})\displaybreak[1] \\[2mm]
= & \; \displaystyle \frac{1} {2^{|A' \setminus B_\Lambda|}} \; \frac{1} {2^{|A \setminus B_\Lambda|}} \; 2^{|A' \setminus B_\Lambda|}\sum_{\text{\shortstack[c]
{$U' \subseteq  B_\Lambda$\\
$U' \cap A=U \cap B_\Lambda$
}
}} \rho(x_{\phi^{U'}_{B_\Lambda}})\displaybreak[1] \\[2mm]
= & \; \displaystyle \frac{1} {2^{|A' \setminus B_\Lambda|}} \; \frac{1} {2^{|A \setminus B_\Lambda|}} \; 2^{|A \setminus B_\Lambda|} \sum_{\text{\shortstack[c]
{$U'' \subseteq  A'$\\
$U'' \cap A=U$
}
}}  \sum_{\text{\shortstack[c]
{$U' \subseteq  B_\Lambda$\\
$U' \cap A'=U'' \cap B_\Lambda$
}
}} \rho(x_{\phi^{U'}_{B_\Lambda}}) & (*)\displaybreak[1] \\[2mm]
= & \; \displaystyle   \sum_{\text{\shortstack[c]
{$U'' \subseteq  A'$\\
$U'' \cap A=U$
}
}}  \frac{1} {2^{|A' \setminus B_\Lambda|}} \sum_{\text{\shortstack[c]
{$U' \subseteq  B_\Lambda$\\
$U' \cap A'=U'' \cap B_\Lambda$
}
}} \rho(x_{\phi^{U'}_{B_\Lambda}})\displaybreak[1] \\[2mm]
= & \; \displaystyle \sum_{\text{\shortstack[c]
{$U'' \subseteq  A'$\\
$U'' \cap A=U$
}
}}\fdd_{A'}(U'')
\end{align*}
where $(*)$ holds since: 
\begin{align*}
\displaystyle 2^{|A' \setminus B_\Lambda|}\sum_{\text{\shortstack[c]
{$U' \subseteq  B_\Lambda$\\
$U' \cap A=U \cap B_\Lambda$
}
}} \rho(x_{\phi^{U'}_{B_\Lambda}}) & \displaybreak[1] \\[2mm]
& \displaystyle \hspace*{-30mm}  = \sum_{U''\subseteq A' \setminus B_\Lambda} \sum_{\text{\shortstack[c]
{$U' \subseteq  B_\Lambda$\\
$U' \cap A=U \cap B_\Lambda$
}
}} \rho(x_{\phi^{U'}_{B_\Lambda}})\displaybreak[1] \\[2mm]
& \displaystyle \hspace*{-30mm} = \sum_{U''' \subseteq A \setminus B_\Lambda} \sum_{\text{\shortstack[c]
{$U'' \subseteq  A'$\\
$U'' \cap A=U$
}
}}  \sum_{\text{\shortstack[c]
{$U' \subseteq  B_\Lambda$\\
$U' \cap A'=U'' \cap B_\Lambda$
}
}} \rho(x_{\phi^{U'}_{B_\Lambda}}) & (**)\displaybreak[1] \\[2mm]
& \displaystyle \hspace*{-30mm} = 2^{|A \setminus B_\Lambda|} \sum_{\text{\shortstack[c]
{$U'' \subseteq  A'$\\
$U'' \cap A=U$
}
}}  \sum_{\text{\shortstack[c]
{$U' \subseteq  B_\Lambda$\\
$U' \cap A'=U'' \cap B_\Lambda$
}
}} \rho(x_{\phi^{U'}_{B_\Lambda}})
\end{align*}
and where $(**)$ holds since 
there is a bijection
$f$ from
$$\{(U'',U'): U'' \subseteq A' \setminus B_\Lambda, U' \subseteq B_\Lambda, U' \cap A = U \cap B_\Lambda\}$$
to
\begin{align*}\{(W''',W'',W'): W''' \subseteq A \setminus B_\Lambda, W'' \subseteq A', & \displaybreak[1] \\[0mm]
  &\hspace*{-40mm} W'' \cap A = U, W' \cap A' = W'' \cap B_\Lambda, W' \subseteq B_\Lambda\}
\end{align*}
such that
$$f(U'',U')= (U'' \cap A, U \cup (U'' \cap (A' \setminus A)) \cup (U' \cap (A' \setminus A)), U').$$
Indeed, \\[1mm]
(a)~$f(U'',U')$ is in the range of $f$: \\[1mm]
(i)~$W''' \subseteq A \setminus B_\Lambda$. Note that $W'''=U'' \cap A$. Hence, $W''' \subseteq A$. Moreover, since $U'' \subseteq B \setminus B_\Lambda$ then 
$W''' \subseteq B \setminus B_\Lambda$.\\[1mm]
(ii)~$W'' \subseteq A'$. Note that $W''= U \cup (U'' \cap (A' \setminus A)) \cup (U' \cap (A' \setminus A))$ and that
$U \subseteq A \subseteq A'$, $U'' \cap (A' \setminus A) \subseteq A'$ and $U' \cap (A' \setminus A) \subseteq A'$. \\[1mm]
(iii)~$W'' \cap A=U$. It is sufficient to note that $W''= U \cup (U'' \cap (A' \setminus A)) \cup (U' \cap (A' \setminus A))$ and that
$U \cap A=U$, $U'' \cap (A' \setminus A) \cap A=\emptyset$ and $U' \cap (A' \setminus A) \cap A=\emptyset$.\\[1mm]
(iv)~$W' \cap A'=W'' \cap B_\Lambda$. It is sufficient to note that $W''= U \cup (U'' \cap (A' \setminus A)) \cup (U' \cap (A' \setminus A))$ and that
$U \cap B_\Lambda=U' \cap A$, $U'' \cap (A' \setminus A) \cap B_\Lambda=\emptyset$ and $U' \cap (A' \setminus A) \cap B_\Lambda = U' \cap (A' \setminus A)$.
So, $W'' \cap B_\Lambda=U' \cap A'=W' \cap A'$. \\[1mm]
(v)~$W' \subseteq B_\Lambda$. Immediate since $U' \subseteq B_\Lambda$.\\[2mm]
(b)~$f$ is injective. Assume that
$f(U''_1,U'_1)=f(U''_2,U'_2)$. Then
$U'_1=U'_2$. Moreover, $U''_1 \cap A= U''_2 \cap A$ and
$$U \cup (U''_1 \cap (A' \setminus A)) \cup (U'_1 \cap (A' \setminus A))=
U \cup (U''_2 \cap (A' \setminus A)) \cup (U'_2 \cap (A' \setminus A)).$$
Observe that $U''_i \cap (A' \setminus A) \cap U= \emptyset$ and $U''_i \cap (A' \setminus A) \cap U'_i \cap (A' \setminus A)= \emptyset$
for $i=1,2$. Hence, $U''_1\cap (A' \setminus A)=U''_2 \cap (A' \setminus A)$. So
\begin{align*}
U''_1 =& \ U''_1 \cap A'\\[1mm]
        =& \ U''_1 \cap (A \cup (A' \setminus A))\displaybreak[1] \\[1mm]
        =& \ (U''_1 \cap A) \cup (U''_1\cap  (A' \setminus A))\displaybreak[1] \\[1mm]
        =& \ (U''_2 \cap A) \cup (U''_2\cap  (A' \setminus A))\displaybreak[1] \\[1mm]
        =& \ U''_2.
\end{align*}
(c)~$f$ is surjective. Let $(W''',W'',W')$ be in the range of $f$. Take
$$U''=W''' \cup ((W'' \setminus A ) \setminus B_\Lambda), \; U'= W'.$$
(i)~$(U'',U')$ is in the domain of $f$: \\[1mm]
- $U'' \subseteq A' \setminus B_\Lambda$. Note that $U'' = W''' \cup ((W'' \setminus A ) \setminus B_\Lambda)$,  $W''' \subseteq A \subseteq A'$ and
$W''' \subseteq B \setminus B_\Lambda$. So, $W''' \subseteq A' \setminus B_\Lambda$. On the other hand,
$(W'' \setminus A ) \setminus B_\Lambda \subseteq A'$ since $W'' \subseteq A'$ and  $(W'' \setminus A ) \setminus B_\Lambda \subseteq B \setminus B_\Lambda$.
So, $U'' \subseteq A' \setminus B_\Lambda$.\\[1mm]
- $U' \subseteq B_\Lambda$. Immediate since $W' \subseteq B_\Lambda$.\\[1mm]
- $U' \cap A=U \cap B_\Lambda$. Observe that 
\begin{align*}
U \cap B_\Lambda =& \ W'' \cap B_\Lambda \cap A)\displaybreak[1] \\[1mm]
              =& \ W' \cap A' \cap A)\displaybreak[1] \\[1mm]
        =& \ W' \cap A)\displaybreak[1] \\[1mm]
                =& \ U' \cap A.
\end{align*}

(ii)~$f(U'',U')=(W''',W'',W')$. Indeed:\\[1mm]
- $U''\cap A=W'''$. In fact
\begin{align*}
U'' \cap A =& \ (W''' \cup ((W'' \setminus A ) \setminus B_\Lambda)) \cap A)\displaybreak[1] \\[1mm]
                =& \  (W''' \cap A) \cup (((W'' \setminus A ) \setminus B_\Lambda) \cap A))\displaybreak[1] \\[1mm]
                =& \ W''' \cap A )\displaybreak[1] \\[1mm]
                =& \ W'''.
\end{align*}
- $U \cup (U'' \cap (A' \setminus A)) \cup (U' \cap (A' \setminus A))=W''$. In fact
\begin{align*}
U \cup (U'' \cap (A' \setminus A)) \cup (U' \cap (A' \setminus A)) = \displaybreak[1] \\[1mm]
               & \hspace*{-48mm}  U \cup ((W''' \cup ((W'' \setminus A ) \setminus B_\Lambda)) \cap (A' \setminus A)) \cup (W' \cap (A' \setminus A))=\displaybreak[1] \\[1mm]
                & \hspace*{-48mm}  U \cup (((W'' \setminus A ) \setminus B_\Lambda) \cap (A' \setminus A)) \cup (W' \cap (A' \setminus A))=\displaybreak[1] \\[1mm]
                & \hspace*{-48mm}  U \cup ((W'' \setminus A ) \setminus B_\Lambda)  \cup ((W'' \cap B_\Lambda) \setminus A)=\displaybreak[1] \\[1mm]
                & \hspace*{-48mm}  U \cup (W'' \setminus A)  =\displaybreak[1] \\[1mm]
               & \hspace*{-48mm}  (W'' \cap A) \cup (W'' \setminus A) = \displaybreak[1] \\[1mm]
                & \hspace*{-48mm}  W''.
\end{align*}
Hence, using Kolmogorov's existence theorem, there exists a unique stochastic valuation $V$ having these finite-dimensional distributions.\\[2mm]
Finally, we show, by induction on the structure of $\varphi$, that
$$V\rho \sat \varphi \quad \text{iff} \quad \reals \rho \satfo \psi.$$ 
Base: $\varphi$ is $\inte{\alpha_j} \mycirc_j p_j$. Observe first that:
\begin{align*}
\Prob_V(\alpha_j) =& \; \ds \sum_{U \in \termvalue{\alpha_j}} \Prob(V_{B_{\alpha_j}} = U)\displaybreak[1] \\[2mm]
                =& \; \ds \sum_{U\in\{v\cap B_{\Lambda}, v \satc \alpha_j\}} \Prob(V_{B_{\Lambda}} = U) & (*)\displaybreak[1] \\[2mm]
                =& \; \ds \sum_{v\cap B_{\Lambda}, v \satc \alpha_j} \fdd_{B_{\Lambda}}(v \cap B_{\Lambda})\displaybreak[1] \\[2mm]
                =& \;  \ds \sum_{v\cap B_{\Lambda}, v \satc \alpha_j} \rho(x_{\phi^{v \cap B_{\Lambda}}_{B_{\Lambda}}})\displaybreak[1] \\[2mm]
                =& \; \ds \sum_{\ell =1,\dots, m_j} \rho(x_{\alpha_{j\ell}}) & (**)\displaybreak[1] \\[2mm]
                =& \; \ds \rho(x_{\alpha_j}) & ({*}{*}{*})
\end{align*}
where $(*)$ holds by Proposition~\ref{prop:mardisj}, $(**)$ holds since $\{\alpha_{11},\ldots,\alpha_{km_k}\}$ is an adequate set of DNF-conjuncts for $\{\alpha_1,\dots,\alpha_k\}$, and $({*}{*}{*})$ holds since
$\reals\rho \satfo Q^{\alpha_1,\ldots,\alpha_{k}}_{\alpha_{11},\ldots,\alpha_{km_{k}}}$.
 Then,  \\[1mm]
($\from$)~Assume that $\reals\rho \satfo x_{\alpha_j} \mycirc_j p_j$. Then
$\rho(x_{\alpha_j}) \mycirc_j  p_j^{\reals \rho}$. So, $\Prob_V(\alpha_j) \mycirc_j  p_j^{\reals \rho}$.
Hence, $V \rho \sat \varphi$. \\[1mm]
$(\to)$ Assume that $V \rho \sat \inte{\alpha_j} \mycirc_j p_j$. Then
$\Prob_V(\alpha_j) \mycirc_j  p_j^{\reals \rho}$. Thus, $\rho(x_{\alpha_j}) \mycirc_j  p_j^{\reals \rho}$
and so $\reals \rho \satfo \psi$. \\[2mm]
Step: $\varphi$ is $\varphi_1 \limp \varphi_2$. Then
$$
\begin{array}{cl}
\reals \rho \satfo \psi_1 \limp \psi_2  \\[1mm]
      \text{iff} & \\[1mm]
     \reals \rho \not \satfo \psi_1 \text{ or } \reals \rho\satfo \psi_2\\[1mm]
           \text{iff} & \qquad\text{ IH } \\[1mm]
         V\rho \not \sat \varphi_1 \text{ or } V\rho \sat \varphi_2\\[1mm]
      \text{iff} & \\[1mm]
      V\rho \sat \varphi
\end{array}
$$
where $\psi_1$ and $\psi_2$ are formulas obtained from $\varphi_1$ and $\varphi_2$, respectively, by replacing each formula $\inte{\alpha} \mycirc p$ by $x_\alpha \mycirc p$.
\end{proof}

\begin{prop} \em \label{prop:dnf}
Let $\varphi \in L_\PPL$. Then, there is $\psi \in L_\PPL$ such that 
$$\der \varphi \displaystyle \leqv \psi$$
and $\psi$ is in disjunctive normal form. 
Moreover, if $\varphi$ is consistent then there is a conjunction of literals in $\psi$ that is also consistent.
\end{prop}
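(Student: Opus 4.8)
The plan is to exploit the fact that the $\PPL$ calculus contains the whole of classical propositional reasoning, supplied by the axiom scheme $\TAUT$ together with $\MP$, and therefore inherits all the propositional normal-form machinery. Viewing each atomic formula $\inte{\alpha} \mycirc p$ occurring in $\varphi$ as a propositional variable, $\varphi$ is nothing but a propositional combination (through $\limp$ and the abbreviations built on it) of these atoms.

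First I would fix an enumeration of the distinct atoms $\inte{\alpha_1} \mycirc_1 p_1, \dots, \inte{\alpha_n} \mycirc_n p_n$ occurring in $\varphi$ and let $\psi$ be a disjunctive normal form of $\varphi$ computed over these atoms by the usual classical procedure, so that $\psi = C_1 \ldisj \cdots \ldisj C_r$ with each disjunct $C_i$ a conjunction of literals, a literal being some $\inte{\alpha_j} \mycirc_j p_j$ or its negation. Since $\psi$ is a propositional DNF of $\varphi$, the biconditional $\varphi \leqv \psi$ is a propositional tautology in the variables $\inte{\alpha_1} \mycirc_1 p_1, \dots, \inte{\alpha_n} \mycirc_n p_n$, hence a tautological formula of $L_\PPL$. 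By $\TAUT$ it is an axiom, so $\der \varphi \leqv \psi$, which settles the first assertion.

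For the moreover part I would argue syntactically by contradiction, taking $\varphi$ consistent to mean $\not\der \lneg \varphi$. Suppose every disjunct $C_i$ is inconsistent, that is $\der \lneg C_i$ for each $i = 1, \dots, r$. Then conjunction introduction (a derivable propositional rule) yields $\der \lneg C_1 \lconj \cdots \lconj \lneg C_r$, and since $(\lneg C_1 \lconj \cdots \lconj \lneg C_r) \leqv \lneg(C_1 \ldisj \cdots \ldisj C_r)$ is a tautology, $\TAUT$ and $\MP$ give $\der \lneg \psi$. Combining this with the already established $\der \varphi \leqv \psi$ and a little propositional reasoning yields $\der \lneg \varphi$, contradicting the consistency of $\varphi$. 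Hence at least one disjunct $C_i$ must be consistent, and this $C_i$ is the sought conjunction of literals.

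I do not expect a genuine obstacle here: the entire argument rests on importing classical propositional facts (existence of a DNF, the De Morgan equivalence, conjunction introduction) as theorems of $\PPL$ via $\TAUT$ and $\MP$, with no appeal to the semantics or to the $\RR$ axioms. The only points demanding a little care are making explicit that the atoms $\inte{\alpha} \mycirc p$ behave exactly as propositional variables with respect to $\TAUT$, and pinning down the right \emph{syntactic} notion of consistency so that the contrapositive, namely that all disjuncts being inconsistent forces $\varphi$ to be inconsistent, is available purely propositionally.
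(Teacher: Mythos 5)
The paper states this proposition without proof, treating it as routine propositional reasoning; your argument is correct and supplies exactly the justification the paper takes for granted: the probability atoms $\inte{\alpha} \mycirc p$ behave as propositional variables, so classical DNF conversion and the inconsistency-propagation argument are imported wholesale through $\TAUT$ and $\MP$, with no appeal to $\RR$ or to the semantics. Your only point of divergence is terminological: the paper's notion of consistency (as used in the subsequent weak completeness proof) is $\varphi \not\der \lfalsum$ rather than $\not\der \lneg\varphi$, but since the calculus has all tautologies as axioms and $\MP$ as its sole rule, the deduction theorem holds and the two notions coincide, exactly as you anticipated in your closing remark.
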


\begin{pth} \em
The logic $\PPL$ is weakly complete.
\end{pth}
\begin{proof}\ \\
Let $\varphi \in L_\PPL$. 
Assume that $\not \der \varphi$. We proceed to show that $\not \ent \varphi$.
First observe that $\lneg\varphi$ must be consistent in the sense that $\lneg\varphi \not\der \lfalsum$
because otherwise from $\lneg\varphi$ one would be able to derive every formula, including in particular, 
$\lneg\varphi \limp \varphi$, and in that case one would have $\der\varphi$. 
By Proposition~\ref{prop:dnf}, 
$$\der (\lneg \varphi) \displaystyle \leqv \bigvee_{m \in M} \eta_m$$ where each disjunct is a conjunction of literals. Since $\lneg\varphi$ is consistent, at least one of the disjuncts must also be consistent.
Let $\eta_m$ be one such consistent disjunct.
In order to show that $\not \ent \varphi$ it is enough to show that $\lneg\varphi$ is satisfiable. 
Hence, it is enough to show that there is one satisfiable disjunct. Indeed, $\eta_m$ is satisfiable.
Towards a contradiction, assume that there are no $V$ and $\rho$ such that 
$V \rho \sat \eta_m$ holds.
Let $\eta_m$ be of the form 
$$(\inte{\alpha_1} \mycirc_1 p_1) \lconj\dots \lconj (\inte{\alpha_k} \mycirc_k p_k).$$ 
Let 
$$\{\alpha_{11}, \ldots, \alpha_{1m_1}, \ldots, \alpha_{k1}, \ldots, \alpha_{km_k}\} \subset L$$
be an adequate set of DNF-conjuncts for $\{\alpha_1,\ldots,\alpha_k\} \subset L$.
Then, by Proposition~\ref{prop:orcftostoval}, there would not exist $\rho$ and such that
$$\reals \rho \satfo Q^{\alpha_1,\ldots,\alpha_{k}}_{\alpha_{11},\ldots,\alpha_{km_{k}}}
					\lconj \left(\bigwedge_{j=1}^{k} \left(x_{\alpha_j} \mycirc_j p_j\right)\right).$$
Hence, we would have 
$$\ds\bigforall
		\left(\left(Q^{\alpha_1,\ldots,\alpha_{k}}_{\alpha_{11},\ldots,\alpha_{km_{k}}} \lconj \bigwedge_{j=1}^k \left(x_{\alpha_{j}} \mycirc_j p_j\right)\right)
					\limp \lfalsum \right)\in \\[1mm] \hspace*\fill \RCOF.$$
Then, by $\RR$, we would establish $$\eta_m \der \lfalsum$$ 
in contradiction with the consistency of $\eta_m$.
\end{proof}

\section{Conservativeness and decidability}\label{sec:cons}

In this section we start by working towards showing that $\PPL$ is a conservative extension of classical propositional logic modulo a translation of classical formulas into the language of $\PPL$. 

Given $\alpha \in L$, we denote by $\alpha^*$ the $\PPL$ formula $\inte{\alpha} =1$. Moreover, given $\Delta  \subseteq L$, we denote by $\Delta^*$ the set $\{\delta^*: \delta \in \Delta\}$.

\begin{prop} \em
Letting $\Delta \cup \{\alpha\} \subseteq L$,
If $\Delta \der_c \alpha$ then $\Delta^* \der \alpha^*$.
\end{prop}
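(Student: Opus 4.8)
The plan is to argue by induction on the length of a classical derivation witnessing $\Delta \der_c \alpha$, transporting each line of that derivation to its $*$-translation by means of the two admissible rules already established for the $\PPL$ calculus. Recall from Section~\ref{subsec:pplexamples} that the rule $\TAUT^*$ (yielding $\inte{\alpha}=1$ whenever $\entc \alpha$) and the rule $\MP^*$ (yielding $\inte{\alpha_2}=1$ from $\inte{\alpha_1}=1$ and $\inte{\alpha_1 \limp \alpha_2}=1$) are both admissible in $\PPL$. These are precisely the images under $*$ of the two ingredients of a classical Hilbert-style derivation, namely the tautological axiom instances and modus ponens.

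First I would fix a classical derivation $\delta_1,\dots,\delta_n$ of $\alpha$ from $\Delta$, so that $\delta_n=\alpha$ and each $\delta_i$ is either a member of $\Delta$, a classical tautology (axiom instance), or obtained from two earlier lines $\delta_j$ and $\delta_l=\delta_j\limp\delta_i$ by modus ponens. I would then prove, by induction on $i$, that $\Delta^* \der \delta_i^*$, that is, $\Delta^* \der \inte{\delta_i}=1$.

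The inductive step splits into three cases. If $\delta_i\in\Delta$, then $\delta_i^*\in\Delta^*$ and the claim holds because a hypothesis is trivially derivable. If $\delta_i$ is a classical tautology, then $\entc\delta_i$, so $\TAUT^*$ gives $\der\delta_i^*$ and hence $\Delta^*\der\delta_i^*$. Finally, if $\delta_i$ arises by modus ponens from $\delta_j$ and $\delta_j\limp\delta_i$, the induction hypothesis supplies $\Delta^*\der\inte{\delta_j}=1$ and $\Delta^*\der\inte{\delta_j\limp\delta_i}=1$, whence $\MP^*$ delivers $\Delta^*\der\inte{\delta_i}=1$. Taking $i=n$ yields $\Delta^*\der\alpha^*$, as required.

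The step I expect to require the most care is the bookkeeping that matches the primitive apparatus of $\CPL$ to these admissible rules: one must check that every axiom instance of the classical calculus is indeed a classical tautology (so that $\TAUT^*$ applies) and that modus ponens is the only rule of inference (so that $\MP^*$ covers every line that is neither a hypothesis nor an axiom). Granting the standard presentation of $\CPL$ extended by $\PPL$ as recalled in Section~\ref{sec:PPL}, this is routine, and no genuinely new probabilistic reasoning is needed beyond the two admissible rules.
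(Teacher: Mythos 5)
Your proposal is correct and follows essentially the same route as the paper: the paper's proof likewise transports a classical derivation sequence $\alpha_1,\dots,\alpha_n$ line by line to $\alpha_1^*,\dots,\alpha_n^*$, using the admissible rules $\TAUT^*$ and $\MP^*$ from Subsection~\ref{subsec:pplexamples} exactly as you do. Your version merely makes explicit the induction on derivation length and the three-case analysis that the paper compresses into a single sentence.
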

\begin{proof}\ \\
Just observe that 
if $\alpha_1,\dots,\alpha_n$ is a derivation sequence of $\alpha=\alpha_n$ from $\Delta$ in $\CPL$,
then, making good use of $\TAUT^*$ and $\MP^*$ 
(the admissible rules established in Subsection~\ref{subsec:pplexamples}),
$\alpha_1^*,\dots,\alpha_n^*$ is a derivation sequence of $\alpha^*$ from $\Delta^*$ 
in $\PPL$.
\end{proof}

\begin{pth} \em
Let $\Delta \cup \{\alpha\} \subseteq L$. Then
$$\Delta^*\ent \alpha^* \quad \text{ iff } \quad \Delta \entc \alpha.$$ 
\end{pth}
\begin{proof}\ \\
$(\to)$ Assume that $\Delta^*\ent \alpha^*$.
Let $v$ be a (classical) valuation such that $v \satc \delta$ for every $\delta \in \Delta$.
Then, by Proposition~\ref{prop:satvssatp}, $\Prob_{V^v}(\delta)\geq 1$ for every $\delta \in \Delta$. Hence, 
$V^v \sat \inte{\delta} =1$ for every $\delta \in \Delta$. Thus, $V^v \sat \inte{\alpha} =1$ and, so,
$\Prob_{V^v}(\alpha)= 1$. Therefore,
using the same proposition, $v \satc \alpha$.\\[2mm]
$(\from)$ Assume that $\Delta \entc \alpha$. 
Then, thanks to the previous proposition, $\Delta^* \der \alpha^*$ and, so, by Theorem~\ref{th:sound},
$\Delta^*\ent \alpha^*$. 
\end{proof}

We now concentrate on the decidability of the $\PPL$ validity problem. For this purpose we assume given the following two algorithms. 
Let $\cA_{\DNF}$ be an algorithm that receives a propositional formula $\alpha$ and a set of propositional symbols $A\supseteq B_\alpha$,
and returns a set $\{\beta_1, \dots,\beta_m\}$ of conjunctions of literals  
such that each $B_{\beta_i} = A$, 
$\beta_1 \ldisj \dots\ldisj \beta_m$ is a disjunctive normal form of $\alpha$ 
and $\not \entc \beta_i\leqv \beta_j$ for $1 \leq i \neq j \leq m$. 
Furthermore, let $\cA_{\RCOF}$ be an algorithm for deciding the validity of sentences in $\RCOF$. 

The procedure in Figure~\ref{alg:ppl} receives a $\PPL$ formula and returns true whenever the formula is valid and false otherwise. Indeed, the following theorem establishes that the execution of $\cA_{\PPL}$ always terminates and does so with the correct output.

\begin{figure}
	       \hrule  \vspace*{4mm}
	       Input: formula $\varphi$ of $\PPL$.
\begin{enumerate}
  \item Let $B_\varphi := \{B_j: B_j \in B \text{ and } B_j \text{ occurs in } \varphi\}$;
  \item Let $\{\alpha_1,\dots,\alpha_k\} :=\{\alpha: \inte{\alpha} \mycirc p \text{ occurs in } \varphi \}$;
  \item Let $\psi$ be the formula obtained from $\varphi$ by replacing each formula $\inte{\alpha} \mycirc p$ by $x_\alpha \mycirc p$;
  \item For each $j =1,\dots, k$:
        \begin{enumerate}
             \item Let $\{\alpha_{j1},\dots,\alpha_{jm_j}\} := \cA_{\DNF}(\alpha_j, B_\varphi)$;
         \end{enumerate}
   \item Return $\cA_\RCOF\left(\bigforall \left(Q^{\alpha_1,\ldots,\alpha_{k}}_{\alpha_{11},\ldots,\alpha_{km_{k}}} \limp \psi\right)\right)$.
  \end{enumerate}
     \hrule  \vspace*{4mm}
\caption{Algorithm $\cA_{\PPL}$.}
\label{alg:ppl}
\end{figure}

\begin{pth} \em\label{th:soundalg}
The procedure $\cA_{\PPL}$ is an algorithm. Moreover, $\cA_{\PPL}$ is correct. 
\end{pth}
\begin{proof}\ \\
It is straightforward to verify that the execution of $\cA_{\PPL}$ always terminate, returning either true or false, so we focus on correctness:\\[2mm]
(i)~We start by showing that if $\cA_\PPL(\varphi)$ is true then $\varphi$ is a valid formula of $\PPL$. 
Let $\varphi$ be a formula of $\PPL$. Assume that $\cA_\PPL(\varphi)$ is true. Then, 
$$\cA_\RCOF\left(\bigforall \left(Q^{\alpha_1,\ldots,\alpha_{k}}_{\alpha_{11},\ldots,\alpha_{km_{k}}} \limp \psi\right)\right)$$
is true. Let $V$ be a stochastic valuation and $\rho$ an assignment. Let $\rho'$ be an assignment over $\reals$ such that, 
$$\rho'(x_\alpha)=\Prob_V(\alpha)$$
and $\rho'(x)=\rho(x)$ for every $x \in X_\nats$. We now show that
$$\reals \rho' \satfo Q^{\alpha_1,\ldots,\alpha_{k}}_{\alpha_{11},\ldots,\alpha_{km_{k}}}.$$
Recall that $\Prob_V$ is an Adams' probability assignment (Theorem~\ref{th:probassV}).
Hence, $\Prob_V$ satisfies Adams' postulates. Therefore:\\[1mm]
$$\reals \rho' \satfo \bigwedge_{U\subseteq B_{\varphi}} 0 \leq x_{\phi_{B_{\varphi}}^U} \leq 1$$
since $\rho'(x_{\phi_{B_{\varphi}}^U})=\Prob_V(\phi_{B_{\varphi}}^U)$ and using postulate P1.
Moreover, 
$$\reals \rho' \satfo \sum_{U\subseteq B_{\varphi}} x_{\phi_{B_{\varphi}}^U}=1$$
by postulates P2 and P4
since 
$$\entc  \bigvee_{U \subseteq B_{\varphi}} \phi_{B_{\varphi}}^U$$ by Proposition~\ref{prop:disjtottaut}, and 
using the fact that 
$\rho'(x_{\phi_{B_{\varphi}}^U})=\Prob_V(\phi_{B_{\varphi}}^U)$ and
$\entc \lneg(\alpha_{j\ell} \lconj \alpha_{j\ell'})$ for every $j=1,\dots,k$ and $1 \leq \ell \neq \ell' \leq m_j$. 
Finally, 
$$\bigwedge_{j=1}^{k'}\left(x_{\alpha_j}=\sum_{\ell=1}^{m_j} x_{\alpha_{j\ell}}\right).$$
since 
$$\Prob_V(\alpha_j)=\Prob_V(\bigvee_{1\leq\ell \leq m_j} \alpha_{j\ell})$$
by postulate P3, and since, by postulate P4 
$$\Prob_V(\bigvee_{1\leq\ell \leq m_j} \alpha_{j\ell})=\sum_{\ell=1}^{m_j} \Prob_V({\alpha_{j\ell}}).$$
Hence, $\reals \rho' \satfo Q^{\alpha_1,\ldots,\alpha_{k}}_{\alpha_{11},\ldots,\alpha_{km_{k}}}$. Moreover
$$\reals \satfo \bigforall \left(Q^{\alpha_1,\ldots,\alpha_{k}}_{\alpha_{11},\ldots,\alpha_{km_{k}}} \limp \psi\right)$$
and so $\reals \rho' \satfo \psi$. \\[2mm]
We now show, by induction on the structure of $\varphi$,  that
$$\reals \rho' \satfo \psi \quad  \text{ iff } \quad  V \rho \sat \varphi.$$
Base: $\varphi$ is $\inte{\alpha} \mycirc p$. Then
$$\reals \rho' \satfo x_\alpha \mycirc p \text{ iff } \rho'(x_\alpha) \mycirc p^{\reals\rho'} 
\text{ iff } \Prob_V(\alpha) \mycirc p^{\reals\rho} \text{ iff }  V\rho \sat \inte{\alpha} \mycirc p.$$
Step: $\varphi$ is $\varphi_1 \limp \varphi_2$. Then
$$
\begin{array}{cl}
\reals \rho' \satfo \psi_1 \limp \psi_2  \\[1mm]
      \text{iff} & \\[1mm]
     \reals \rho' \not \satfo \psi_1 \text{ or } \reals \rho' \satfo \psi_2\\[1mm]
           \text{iff} & \text{ IH } \\[1mm]
         V\rho \not \sat \varphi_1 \text{ or } V\rho \sat \varphi_2\\[1mm]
      \text{iff} & \\[1mm]
      V\rho \sat \varphi
\end{array}
$$
where $\psi_1$ and $\psi_2$ are formulas obtained from $\varphi_1$ and $\varphi_2$, respectively, by replacing each formula $\inte{\alpha} \mycirc p$ by $x_\alpha \mycirc p$. Therefore, $\ent \varphi$.\\[2mm]
(ii)~We now show that if $\cA_\PPL(\varphi)$ is false  then $\varphi$ is not a valid formula of $\PPL$.  By contraposition, assume that $\ent \varphi$. We prove that, for every assignment $\rho$ over $\reals$,
$$\reals\rho \satfo 
	\bigforall \left(Q^{\alpha_1,\ldots,\alpha_{k}}_{\alpha_{11},\ldots,\alpha_{km_{k}}} \limp \psi\right).$$
Assume that
$$\reals \rho \satfo Q^{\alpha_1,\ldots,\alpha_{k}}_{\alpha_{11},\ldots,\alpha_{km_{k}}}.$$
Let $V$ be the stochastic valuation induced by $\rho$ as defined  in Proposition~\ref{prop:orcftostoval}. 
Then, $V \rho \sat \varphi$ since $\ent \varphi$ and, so, by the same proposition,
$\reals \rho \satfo \psi$. \\[1mm]
Therefore,
$$\cA_\RCOF\left(\bigforall \left(Q^{\alpha_1,\ldots,\alpha_{k}}_{\alpha_{11},\ldots,\alpha_{km_{k}}} \limp \psi\right)\right)$$
returns true and so does $\cA_\PPL(\varphi)$. 
\end{proof}

\section{Concluding remarks}\label{sec:concs}

Always within the setting of  propositional logic we looked at ways of introducing probabilistic reasoning into logic.

First, towards assigning probabilities to valuations we proposed to look at a random valuation as a stochastic process indexed by the set of propositional symbols. This novel notion (of stochastic valuation as we called it) has the advantage of allowing the use of Kolmogorov's existence theorem for moving from the finite-dimensional probability distributions to the distribution in the underlying probability space. In particular, the existence theorem was quite useful in establishing the equivalence between Adams' probability assignments to formulas and stochastic valuations. 

Afterwards, we investigated the notion of probabilistic entailment in the scenario of leaving the propositional language unchanged. We found that the notion previously proposed in the literature does not enjoy key properties of logical consequence. For this reason, we proposed a more appropriate notion of probabilistic entailment. This notion turned out to be identical to classical entailment.

Since nothing is gained by introducing probabilities without changing the language,
we decided to set-up a small enrichment ($\PPL$) of classical propositional logic
by adding a language construct, 
inspired by~\cite{hal:90,hei:01,pmat:acs:css:05a}, 
that allows the constraining (without nesting) of the probability of a formula.

The resulting extension of classical propositional logic was shown to be rich enough for setting-up interesting theories and easy to axiomatize by relying on the decidable theory of real closed ordered fields 
($\RCOF$). 
In due course, we proved that the extension is conservative and still decidable.

Note that a finite set of propositional symbols was assumed in the logics previously reported in the literature that allow the constraining of the probability of a formula, while $\PPL$ provides the means for working with a denumerable set of propositional symbols, adding a lot to its expressive power. To this end, the notion of stochastic valuation as a stochastic process was a key ingredient.

Concerning future work, it seems worthwhile to investigate other meta-properties of $\PPL$, starting with bounding the complexity of its decision problem. 
We expect this complexity to be much lower than the complexity of $\RCOF$ theoremhood, since we only need to recognize $\RCOF$ theorems of a very simple clausal form.
 
Strong completeness of the $\PPL$ axiomatization was out of question because the semantics over $\reals$ led to a non-compact entailment.
Relaxing the semantics by allowing any model of $\RCOF$ may open the door to establishing strong completeness. 
Clearly, one should start by investigating whether Kolmogorov existence theorem can be carried over to every $\RCOF$ model.

The relevance of abduction in probabilistic reasoning was recognized in~\cite{hae:11}.
One would like to be able to compute the required probability of the conjunction of the relevant hypotheses 
in order to ensure an envisaged probability for the conclusion.
In this front, we expect to be able to find inspiration in the calculus presented in~\cite{acs:jfr:css:pmat:13},
given its abductive nature, towards developing an abduction calculus for $\PPL$.

\section*{Acknowledgments}

The authors are grateful to Juliana Bueno-Soler and Walter Carnielli for reawakening their interest on the probabilization of propositional logic.
This work was supported by Fundação para a Ciência e a Tecnologia by way of grant UID/MAT/04561/2013 to 
Centro de Matemática, Aplicações Fundamentais e Investigação Operacional of Universidade de Lisboa (CMAF-CIO)
and by the European Union's Seventh Framework Programme for Research (FP7) 
namely through project LANDAUER (GA 318287).




\end{document}

